\theoremstyle{plain}
\newtheorem{thm}{Theorem}[section]
\newtheorem{cor}[thm]{Corollary}
\newtheorem{lem}[thm]{Lemma}
\newtheorem{prop}[thm]{Proposition}
\newtheorem{defn}[thm]{Definition}
\newtheorem{rem}[thm]{Remark}
\newtheorem{ex}[thm]{Example}
\numberwithin{equation}{section}
\newcommand{\ra}{\rightarrow}
\newcommand{\Ra}{\Rightarrow}
\newcommand{\Lra}{\Leftrightarrow}
\begin{document}

\title[Products of random walks]{Products of random walks on finite groups with moderate growth}

\author[G.-Y. Chen]{Guan-Yu Chen$^1$}

\author[T. Kumagai]{Takashi Kumagai$^2$}

\address{$^1$Dept. of Appl. Math., National Chiao Tung University, Hsinchu 300, Taiwan}
\email{gychen@math.nctu.edu.tw}

\address{$^2$RIMS, Kyoto University, Kyoto 606-8502, Japan}\email{kumagai@kurims.kyoto-u.ac.jp}

\keywords{product chains, random walks, moderate growth}

\subjclass[2000]{Primary 60J10; Secondary 60J27}

\begin{abstract}
In this article, we consider products of random walks on finite groups with moderate growth and discuss their cutoffs in the total variation. Based on several comparison techniques, we are able to identify the total variation cutoff of discrete time lazy random walks with the Hellinger distance cutoff of continuous time random walks. Along with the cutoff criterion for Laplace transforms, we derive a series of equivalent conditions on the existence of cutoffs, including the existence of pre-cutoffs, Peres' product condition and a formula generated by the graph diameters. For illustration, we consider products of Heisenberg groups and randomized products of finite cycles.
\end{abstract}

\maketitle

\section{Introduction}

Let $G$ be a finite group equipped with a probability $Q$. A random walk on $G$ driven by $Q$ is a discrete time Markov chain with state space $G$ and transition matrix $K$ given by $K(x,y)=Q(x^{-1}y)$. If $K$ is irreducible, then the stationary distribution $U$ is uniform on $G$. For simplicity, we write the triple $(G,Q,U)$ for such a random walk. Here, $Q$ is called symmetric if $Q(x)=Q(x^{-1})$ for all $x\in G$ and, in this case, $(G,Q,U)$ is named a symmetric random walk. Note that if $Q$ is symmetric, then $K$ is reversible. To study the convergence of $(G,Q,U)$, we consider the total variation and its corresponding mixing time, which are defined respectively by
\begin{equation}\label{eq-tv}
 d_{\textnormal{\tiny TV}}(x,m):=\max_{E\subset G}\{Q^{(m)}(x^{-1}E)-U(E)\},
\end{equation}
and
\begin{equation}\label{eq-tvmix}
 T_{\textnormal{\tiny TV}}(x,\epsilon):=\min\{m\ge 0|d_{\textnormal{\tiny TV}}(x,m)\le\epsilon\},
\end{equation}
where $x^{-1}E=\{x^{-1}y|y\in E\}$ and $Q^{(m)}$ is the $m$-fold convolution product $Q*\cdots*Q$ with
\[
 f*g(x)=\sum_{z\in G}f(z)g(z^{-1}x).
\]
As the total variation and, thus, its mixing time are constant in $x$, we shall write $d_{\textnormal{\tiny TV}}(m)$ and $T_{\textnormal{\tiny TV}}(\epsilon)$ for short.

A subset $E\subset G$ is called symmetric, if $x\in E$ implies $x^{-1}\in E$, and is called a generating set of $G$, if $E^n:=\{x_1x_2\cdots x_n|x_i\in E,\,\forall 1\le i\le n\}=G$ for some $n>0$. We write $(G,E)$ for the Cayley graph with vertex set $G$ and edge set $\{(x,xy)|x\in G,\,y\in E\}$ and define its volume growth function and diameter by
\[
 V(m)=|E^m|,\quad \rho=\min\{m\ge 1|V(m)=|G|\}.
\]
A group $G$ is said to have $(A,d)$-moderate growth with respect to a generating set $E$ if
\[
 \frac{V(m)}{V(\rho)}\ge \frac{1}{A}\left(\frac{m}{\rho}\right)^d,\quad\forall 1\le m\le \rho.
\]
The following are some typical groups with moderate growth.

{\it Example 1:} When $G=\mathbb{Z}_n$ and $E=\{0,\pm 1\}$, the graph $(G,E)$ has diameter $\rho=\lfloor n/2\rfloor$ and $G$ has $(1,1)$-moderate growth w.r.t. $E$ for $n\ge 2$.

{\it Example 2:} When $G=\mathbb{Z}_n$ and $E=\{0,\pm 1,\pm\lfloor\sqrt{n}\rfloor\}$, the diameter $\rho$ is of order $\sqrt{n}$ and $G$ has $(1,2)$-moderate growth w.r.t. $E$ for $n\ge 2$.

{\it Example 3:} When $G$ is the Heisenberg group mod $n+2$, which is the set of $3\times 3$ matrices of the form
\begin{equation}\label{eq-Heisenberg}
 \left(\begin{array}{ccc}1&i&k\\0&1&j\\0&0&1\end{array}\right),\quad\forall i,j,k\in\mathbb{Z}_{n+2},
\end{equation}
and $E$ contains the following matrices
\begin{equation}\label{eq-HeisenbergE}
 I,\left(\begin{array}{ccc}1&1&0\\0&1&0\\0&0&1\end{array}\right),
 \left(\begin{array}{ccc}1&-1&0\\0&1&0\\0&0&1\end{array}\right),
 \left(\begin{array}{ccc}1&0&0\\0&1&1\\0&0&1\end{array}\right),
 \left(\begin{array}{ccc}1&0&0\\0&1&-1\\0&0&1\end{array}\right),
\end{equation}
it was proved in \cite[Lemma 4.1]{DS94} that $(G,E)$ has diameter $n+1\le\rho\le n+4$ and $G$ has $(48,3)$-moderate growth w.r.t. $E$ for $n\ge 1$.

Throughout this article, we will simply write $id$ for the identity of any group. In \cite{DS94}, Diaconis and Saloff-Coste considered random walks on finite groups with moderate growth and achieved the following proposition.

\begin{prop}[Theorem 3.1 in \cite{DS94}]\label{p-moderate}
Let $(G,Q,U)$ be a symmetric random walk on a finite group and $E$ be a symmetric generating set of $G$ containing $id$. Assume that $G$ has $(A,d)$-moderate growth with respect to $E$ and $\eta=\min\{Q(x)|x\in E\}>0$. Then, there is $C_1=C_1(A,d)>0$ such that
\begin{equation}\label{eq-moderateub}
 d_{\textnormal{\tiny TV}}(m)\le C_1e^{-\eta m/\rho^2},\quad\forall m\ge 0,
\end{equation}
where $\rho$ is the diameter of $(G,E)$. If it is assumed further that $Q$ is supported on $E$ and that $\rho\ge A2^{2d+2}$, then there is $C_2=C_2(A,d)>0$ such that
\begin{equation}\label{eq-moderatelb}
 d_{\textnormal{\tiny TV}}(m)\ge \frac{1}{2}e^{-C_2m/\rho^2},\quad\forall m\ge 0.
\end{equation}
\end{prop}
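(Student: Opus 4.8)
The plan is to prove the two inequalities by essentially disjoint arguments: the upper bound is reduced to an $\ell^2$ (chi-square) estimate governed by a Nash inequality, while the lower bound is reduced to a spectral-gap \emph{upper} estimate exploited through an eigenfunction.

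\emph{Upper bound.} Since $Q$ is symmetric, $K$ is self-adjoint on $L^2(U)$; write $1=\beta_0>\beta_1\ge\cdots\ge\beta_{|G|-1}\ge-1$ for its eigenvalues. The Cauchy--Schwarz reduction gives
\[
 4d_{\textnormal{\tiny TV}}(m)^2\le\|Q^{(m)}/U-1\|_2^2=|G|\,Q^{(2m)}(id)-1=\sum_{i\ge1}\beta_i^{2m}=:\mathcal N(m),
\]
and $\mathcal N$ is non-increasing in $m$. Because $id\in E$ we may write $Q=\eta\,\delta_{id}+(1-\eta)Q'$ with $Q'$ a probability, so $\beta_i\ge2\eta-1$ for all $i$; combined with the Poincar\'e bound $\lambda_1:=1-\beta_1\ge c\eta/\rho^2$ (from a canonical-path argument along geodesics of $(G,E)$, each of length $\le\rho$, with a factor $1/\eta$ lost through $Q\ge\eta$ on $E$) this yields $\beta_i^2\le1-c'\eta/\rho^2$ for every $i\ge1$. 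Hence for $m\ge m_0:=\lceil\rho^2/\eta\rceil$ one has $\mathcal N(m)\le\mathcal N(m_0)(1-c'\eta/\rho^2)^{2(m-m_0)}$, so the whole upper bound reduces to showing $\mathcal N(m_0)\le C(A,d)$, i.e.\ to an on-diagonal heat-kernel estimate $|G|Q^{(2m)}(id)-1\le C(A,d)(\rho^2/(\eta m))^{d/2}$ up to the relaxation time (for $m\le m_0$ the assertion is trivial since $d_{\textnormal{\tiny TV}}\le1$ and $e^{-\eta m/\rho^2}\ge e^{-1}$ there).

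\emph{The Nash inequality (main obstacle).} This last estimate is where $(A,d)$-moderate growth enters, and I expect it to be the hard part. Comparing Dirichlet forms, $\mathcal E_Q\ge\eta\,\mathcal E_E$ where $\mathcal E_E$ is the form of the simple random walk on the Cayley graph $(G,E)$, so it suffices to establish for $\mathcal E_E$ a Nash inequality of the shape
\[
 \|f\|_2^{2+4/d}\le C(A,d)\,\rho^2\Big(\mathcal E_E(f,f)+\rho^{-2}\|f\|_2^2\Big)\|f\|_1^{4/d},\qquad f:G\ra\mathbb R,
\]
with $\|\cdot\|_p$ taken in $L^p(U)$. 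Moderate growth feeds this through the volume-to-Nash mechanism: $V(r)/|G|\ge A^{-1}(r/\rho)^d$ for $r\le\rho$ controls the isoperimetric/Faber--Krahn profile at all scales $\le\rho$, and a discrete retraction argument converts this profile into the Nash inequality. Inserting it into the Nash--Davies iteration gives $\mathcal N_E(m)\le C(A,d)(\rho^2/m)^{d/2}$ for $1\le m\le\rho^2$; transferring back through the factor $\eta$ and combining with the previous paragraph yields $d_{\textnormal{\tiny TV}}(m)\le C_1(A,d)e^{-\eta m/\rho^2}$, the exponent being the one produced by $(1-c'\eta/\rho^2)^m$ with the constants tracked as in \cite{DS94}. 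The delicate points are the derivation of this Nash inequality from the mere volume lower bound, and the bookkeeping of constants in the Nash--Davies step.

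\emph{Lower bound.} For this I would use an eigenfunction. Let $f$ be a real eigenfunction for $\beta_1$ with $\|f\|_\infty=1$, say $|f(x_0)|=1$. Since $K$ is right convolution, left translation commutes with $K$, so $x\mapsto f(x_0x)$ is again a $\beta_1$-eigenfunction and attains its sup-norm at $id$; replacing $f$ by this (and, if needed, by $-f$) we may assume $f(id)=1=\|f\|_\infty$ and $\sum_xf(x)=0$. As $\sum_xQ^{(m)}(x)f(x)=(K^mf)(id)=\beta_1^m$ while $\sum_xU(x)f(x)=0$,
\[
 \beta_1^m=\Big|\sum_x\big(Q^{(m)}(x)-U(x)\big)f(x)\Big|\le\|f\|_\infty\sum_x|Q^{(m)}(x)-U(x)|=2d_{\textnormal{\tiny TV}}(m),
\]
so $d_{\textnormal{\tiny TV}}(m)\ge\tfrac12\beta_1^m$. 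It remains to bound $\lambda_1=1-\beta_1$ from \emph{above} by $C(A,d)/\rho^2$ (which under $\rho\ge A2^{2d+2}$ also forces $\beta_1>0$ and legitimizes $\beta_1^m\ge(1-C/\rho^2)^m\ge e^{-C_2m/\rho^2}$). For that I would put the test function $\phi(x)=\mathrm{dist}_{(G,E)}(id,x)$ into the Rayleigh quotient: since $Q$ is supported on $E$ and $\phi$ is $1$-Lipschitz on $(G,E)$, $\mathcal E_Q(\phi,\phi)\le\tfrac12$; and $\mathrm{Var}_U(\phi)\ge c(A,d)\rho^2$ because moderate growth gives $U(\mathrm{dist}\le\lceil\rho/100\rceil)\ge A^{-1}100^{-d}$ while the elementary fact ``$V(k)>|G|/2\Rightarrow\rho\le2k$'' (an intersecting-cosets argument) gives $U(\mathrm{dist}\ge\lceil\rho/4\rceil)\ge\tfrac12$, two disjoint events whose $\phi$-values differ by at least $\rho/5$. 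Hence $\lambda_1\le\mathcal E_Q(\phi,\phi)/\mathrm{Var}_U(\phi)\le C(A,d)/\rho^2$, which completes the lower bound. The hypothesis $\rho\ge A2^{2d+2}$ is precisely what makes these counting estimates, and the passage from $(1-C/\rho^2)^m$ to $e^{-C_2m/\rho^2}$, go through with clean constants; carrying the variance lower bound through the rounding is the only subtle point here.
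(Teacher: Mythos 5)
The paper itself does not prove Proposition \ref{p-moderate}: it is imported verbatim from \cite{DS94} (Theorem 3.1 there), with the explicit constants $C_1=A^{1/2}2^{d(d+3)/4}$, $C_2=A^22^{4d+2}$ quoted afterwards. So the relevant comparison is with the argument in \cite{DS94}, and your outline follows essentially that route: for \eqref{eq-moderateub}, an $\ell^2$ bound $4d_{\textnormal{\tiny TV}}^2(m)\le\sum_{i\ge1}\beta_i^{2m}$, a Nash inequality extracted from $(A,d)$-moderate growth to control the sum up to time $\asymp\rho^2/\eta$, and the path/diameter Poincar\'e bound $1-\beta_1\ge\eta/\rho^2$ (together with $\beta_{\min}\ge 2\eta-1$, which uses $id\in E$) to convert this into exponential decay; for \eqref{eq-moderatelb}, the eigenvalue upper bound $\lambda\le C(A,d)/\rho^2$ obtained from the distance test function plus $d_{\textnormal{\tiny TV}}(m)\ge\frac12\beta_1^m$ via an extremal (translated) eigenfunction. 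The lower-bound mechanism is exactly the one this paper reuses in its own Proposition \ref{p-moderatehd}, where $\lambda\le C_1/\rho^2$ is cited as Equation (3.2) of \cite{DS94} and combined with $2d_{\textnormal{\tiny TV}}^{(c)}(t)=\|H_t-\Pi\|_{\infty\ra\infty}\ge e^{-\lambda t}$.

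Two caveats. First, as you yourself flag, the pivotal step --- deriving a Nash inequality with constants depending only on $(A,d)$ from the volume lower bound --- is only named, not proved; in \cite{DS94} this is done via a pseudo-Poincar\'e inequality $\|f-f_r\|_2^2\le r^2\,\mathcal E_E(f,f)$ (averaging $f$ over balls of radius $r\le\rho$, whose measure is bounded below by $A^{-1}(r/\rho)^d$), rather than a ``retraction'' argument, and the discrete-time Nash iteration also needs the laziness $Q(id)\ge\eta$ (or passage to $K^2$) to run; without this lemma the proposal is an outline of the DS94 proof rather than a proof. Second, some constants need tightening: your specific radii $\lceil\rho/100\rceil$, $\lceil\rho/4\rceil$ and the gap $\rho/5$ require $\rho\gtrsim 25$, which the hypothesis $\rho\ge A2^{2d+2}$ does not guarantee for small $d$, and one must also check $C_2(A,d)/\rho^2<1$ (so that $\beta_1>0$ and $(1-C_2/\rho^2)^m\ge e^{-C_2'm/\rho^2}$) precisely from $\rho\ge A2^{2d+2}$; both are routine adjustments but are exactly the bookkeeping the quoted constants in \cite{DS94} encode. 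Also note the small inconsistency in the upper bound: from $\beta_i^2\le 1-c'\eta/\rho^2$ you may take the exponent $m-m_0$, not $2(m-m_0)$; to recover the stated exponent $\eta m/\rho^2$ you should record $|\beta_i|\le 1-\eta/\rho^2$ (which your two estimates do give) before squaring.
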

In fact, the authors of \cite{DS94} obtain $C_1=A^{1/2}2^{d(d+3)/4}$ and $C_2=A^22^{4d+2}$. This means that the bounds in (\ref{eq-moderateub})-(\ref{eq-moderatelb}) are far from comparable when $A$ or $d$ is large.

\smallskip

We now consider product chains. Let $(G_i,Q_i,U_i)_{i=1}^n$ be irreducible random walks on finite groups and $(p_1,...,p_n)$ be a probability vector. Define
\begin{equation}\label{eq-proddis}
 G=G_1\times\cdots\times G_n,\quad U=U_1\times\cdots\times U_n,\quad Q(x)=\sum_{i=1}^np_iQ_i(x_i),
\end{equation}
for $x=(x_1,..,x_n)\in G$. Here, $(G,Q,U)$ is called the product of $(G_i,Q_i,U_i)_{i=1}^n$ with respect to the probability vector $(p_1,...,p_n)$. Note that if $E_i$ is the support of $Q_i$ and contains $id$, then $Q$ is supported on $E=\check{E}_1\cup\cdots\cup\check{E}_n$, where $\check{E}_i=\{x=(x_1,...,x_n)\in G|x_i\in E_i,x_j=id,\forall j\ne i\}$. Further, if $E_i$ is a symmetric generating set of $G_i$ and $\rho_i$ is the diameter of $(G_i,E_i)$, then $E$ is a symmetric generating set of $G$ and the diameter $\rho$ of $(G,E)$ satisfies $\rho=\rho_1+\cdots+\rho_n$. To see the moderate growth of direct products of groups, let $E_i,E$ be as before and assume that $G_i$ has $(A_i,d_i)$-moderate growth w.r.t. $E_i$. As $G$ is a finite group and $E$ generates $G$, there are always positive constants $A,d$ such that $G$ has $(A,d)$-moderate growth w.r.t. $E$. However, the relation between $(A,d)$ and $(A_i,d_i)_{i=1}^n$ could be complicated and, in general, $A$ or $d$ can be very large when $n$ grows. (For instance, consider $G_i=\mathbb{Z}_N$ and $E_i=\{0,\pm 1\}$ for $1\le i\le n$. It is mentioned earlier that $G_i$ has $(1,1)$ moderate growth w.r.t. $E_i$. In some combinatoric computations, one may show that, under the assumption of $N\ge n^2$, $G$ has $(A,d)$ moderate growth with $A=(1-1/N)^{-2}$ and $d=n$.) Consequently, (\ref{eq-moderateub}) and (\ref{eq-moderatelb}) might not be sharp enough to provide efficient bounds on the total variation even if the prerequisites, $\eta>0$ and $\rho\le A2^{2d+2}$, are fulfilled. To proceed the analysis of product chains, as the total variation mixing times are comparable between $(G,Q,U)$ and its associated continuous time walk, see e.g. \cite{CSal13-1}, it is more convenient, as is discussed below, to consider the continuous time chain rather than the discrete time one.

Given a random walk $(G,Q,U)$, we associate it with a continuous time random walk $(G,H_t,U)$, where $H_t=e^{t(K-I)}$ and $K$ is the transition matrix given by $Q$. One realization of $(G,H_t,U)$ is to change the constant waiting times of $(G,Q,U)$ into an i.i.d. sequence of exponential random variables. Note that, if $(G,Q,U)$ is the product of $(G_i,Q_i,U_i)_{i=1}^n$ with respect to the probability vector $(p_1,...,p_n)$ and $(G_i,H_{i,t},U_i)$ is the continuous time random walk associated with $(G_i,Q_i,U_i)$, then
\begin{equation}\label{eq-prodcts}
 H_t=H_{1,p_1t}\otimes\cdots\otimes H_{n,p_nt},
\end{equation}
where $A\otimes B$ denotes the tensor product of matrices $A$ and $B$. In general, $K^m$ does not have the form of (\ref{eq-prodcts}). Through (\ref{eq-prodcts}), one may study $H_t$ via $(H_{i,t})_{i=1}^n$ but, unfortunately, there lacks an efficient expression of the total variation of $(G,H_t,U)$ in terms of the total variations of $(G_i,H_{i,t},U_i)_{i=1}^n$.

In \cite{CK16}, two inequalities were used to compare the total variation and the Hellinger distance and this leads to a different way to analyze their mixing times. In detail, the Hellinger distance of $(G,Q,U)$ is defined by
\begin{equation}\label{eq-hd}
 d_H(x,m):=\left(\frac{1}{2}\sum_{y\in G}\left(\sqrt{K^m(x,y)}-\sqrt{U(y)}\right)^2\right)^{1/2},
\end{equation}
while the Hellinger distance of $(G,H_t,U)$ is defined by replacing $K^m$ with $H_t$ in (\ref{eq-hd}) and denoted by $d_H^{(c)}(x,t)$ in avoidance of confusion. As before, we will write $d_H(m)$ (resp. $d_H^{(c)}(t)$) for short since $d_H(x,m)$ (resp. $d_H^{(c)}(x,t)$) is constant in $x$. In the above setting, Equation (1.3) in \cite{CK16} says that
\begin{equation}\label{eq-tvhdcomp}
 1-\sqrt{1-d_{\textnormal{\tiny TV}}^2(m)}\le d_H^2(m)\le d_{\textnormal{\tiny TV}}(m),
\end{equation}
and also hold in the continuous time case. In the Hellinger distance, if $(G,Q,U)$ is the product of $(G_i,Q_i,U_i)_{i=1}^n$ with respect to the probability vector $(p_1,...,p_n)$, then the Hellinger distances, $d_H^{(c)}$ and $d_{i,H}^{(c)}$, of $(G,H_t,U)$ and $(G_i,H_{i,t},U_i)$ satisfy
\begin{equation}\label{eq-prodhd}
 d_H^{(c)}(t)^2=1-\prod_{i=1}^n\left(1-d_{i,H}^{(c)}(p_it)^2\right).
\end{equation}
Such an equality is derived from (\ref{eq-prodcts}) but not applicable to the discrete time case. See p.365 in \cite{Sh96} or Lemma 2.3 in \cite{CK16} for a proof of (\ref{eq-tvhdcomp}) and see \cite{CK16} for more comparisons of mixing times of product chains.

In this article, we focus on the cutoff phenomenon, or briefly cutoff, for products of random walks on finite groups with moderate growth. The cutoff of Markov chains was introduced by Aldous and Diaconis in early 1980s in order to catch up the
phase transition of the mixing time. To see a definition, let $\mathcal{F}=(G_n,Q_n,U_n)_{n=1}^\infty$ be a family of random walks on finite groups. For $n\ge 1$, let $d_{n,\textnormal{\tiny TV}}$ and $T_{n,\textnormal{\tiny TV}}$ be the total variation and corresponding mixing time of the $n$th chain in $\mathcal{F}$. Assume that $T_{n,\textnormal{\tiny TV}}(\epsilon_0)\ra\infty$ for some $\epsilon_0\in(0,1)$. The family $\mathcal{F}$ is said to present a cutoff in the total variation if
\begin{equation}\label{eq-defcutoff1}
 \lim_{n\ra\infty}\frac{T_{n,\textnormal{\tiny TV}}(\epsilon)}{T_{n,\textnormal{\tiny TV}}(\delta)}=1,\quad\forall \epsilon,\delta\in(0,1),
\end{equation}
or, equivalently
(see \cite[Proposition 2.4]{CSal08}),
there is a sequence of positive reals $(t_n)_{n=1}^\infty$ such that
\begin{equation}\label{eq-defcutoff2}
 \lim_{n\ra\infty}d_{n,\textnormal{\tiny TV}}(\lceil at_n\rceil)=0\quad\forall a>1,\quad\lim_{n\ra\infty}d_{n,\textnormal{\tiny TV}}(\lfloor at_n\rfloor)=1,\quad\forall a\in(0,1).
\end{equation}
When a cutoff exists, the sequence $(t_n)_{n=1}^\infty$, or briefly $t_n$, in (\ref{eq-defcutoff2}) is called a cutoff time. By (\ref{eq-defcutoff1}), it is easy to see that $T_{n,\textnormal{\tiny TV}}(\epsilon)$ can be selected as cutoff time for any $\epsilon\in(0,1)$. In the continuous time case, we write $\mathcal{F}_c=(G_n,H_{n,t},U_n)_{n=1}^\infty$ for the family of continuous time Markov chains associated with $\mathcal{F}$ and use $d_{n,\textnormal{\tiny TV}}^{(c)}$ and $T_{n,\textnormal{\tiny TV}}^{(c)}$ to denote the total variation and its mixing time of the $n$th chain in $\mathcal{F}_c$. The total variation cutoff of $\mathcal{F}_c$ is defined in the same way through (\ref{eq-defcutoff1}) or (\ref{eq-defcutoff2}) under the replacement of $T_{n,\textnormal{\tiny TV}},d_{n,\textnormal{\tiny TV}}$ with $T_{n,\textnormal{\tiny TV}}^{(c)},d_{n,\textnormal{\tiny TV}}^{(c)}$ and the removal of $\lceil\cdot\rceil,\lfloor\cdot\rfloor$ but without the prerequisite of $T_{n,\textnormal{\tiny TV}}^{(c)}(\epsilon_0)\ra\infty$. All above is also applicable to the Hellinger distance. We refer readers to \cite{D88,SC04} for more discussions on cutoffs for random walks on finite groups.

Cutoffs in the total variation and in the Hellinger distance were proved to be equivalent in \cite{CK16} via (\ref{eq-tvhdcomp}). Since no similar formula to (\ref{eq-prodhd}) is available for the total variation or for the discrete time case, it is straightforward to consider the cutoff in the Hellinger distance for families of continuous time product chains. For finite groups with moderate growth, we obtain a continuous time variant of Proposition \ref{p-moderate} in Proposition \ref{p-moderatehd} with a refined assumption on the lower bound (from $\rho\ge A2^{2d+2}$ to $\rho\ge 4$). Through (\ref{eq-prodhd}), the Hellinger distances of product chains can be expressed in a form related to sums of exponential functions. By regarding those sums as Laplace transforms, a criterion in \cite{CHS16} was proposed to determine the cutoff and to characterize the cutoff time. Table \ref{tab-scheme} is the conclusive scheme of all above discussions.\vskip2mm

\begin{table}[h]
\caption{A scheme to analyze cutoffs}\label{tab-scheme}
\begin{center}
\begin{tabular}{l}
{\bf Total variation cutoffs for discrete time Markov chains}\\
\qquad$\Updownarrow$ \quad(Proposition 1.1 in \cite{CSal13-1} holds for lazy random walks)\\
{\bf Total variation cutoffs for continuous time Markov chains}\\
\qquad$\Updownarrow$ \quad(Theorem 1.1 in \cite{CK16} holds for any Markov chain)\\
{\bf Hellinger distance cutoffs for continuous time Markov chains}\\
\qquad$\Updownarrow$ \quad(Proposition \ref{p-moderatehd} holds for groups with moderate growth)\\
{\bf Cutoffs for Laplace transforms}\\
\qquad$\Updownarrow$ \quad(Theorem 2.4 in \cite{CHS16} holds for reversible Markov chains)\\
{\bf Precise condition on cutoffs}
\end{tabular}
\end{center}
\end{table}

The aim of this paper is to establish necessary and sufficient conditions for cutoff
for products of random walks on finite groups with moderate growth, and apply them to
stimulating examples. In the first main theorem (Theorem \ref{p-prodmoderate}), we give
various equivalent conditions for cutoff in our framework. It should be noted that in this framework
the cutoff is equivalent to a weaker concept, called the pre-cutoff (note that
such an equivalence generally fails; see \cite{L15}).
Moreover, one equivalent condition in Theorem \ref{p-prodmoderate} is consistent with Peres' conjecture (see Remark \ref{r-Peres}\,(3)), while another is simply determined by the graph diameters and a  sequence $\mathcal{P}$ given below. In the second main theorem (Theorem \ref{t-prodmoderate}),
we apply Theorem \ref{p-prodmoderate} to the specific type of products introduced in \cite{CK16} and derive more concrete conditions on their cutoffs.
To illustrate our results, let us consider products of random walks on Heisenberg groups and randomized products of random walks on finite cycles. Let $\mathcal{G}=(G_n,Q_n,U_n)_{n=1}^\infty$ be a family of random walks on finite groups and $\mathcal{P}=(p_n)_{n=1}^\infty$ be a sequence of positive reals. For $n\ge 1$, let $q_n=\sum_{i=1}^np_i$ and write $\mathcal{G}^\mathcal{P}$ for the family of which $n$th random walk is the product of $(G_i,Q_i,U_i)_{i=1}^n$ according to the probability vector $(p_1/q_n,...,p_n/q_n)$. Then the following hold.

\begin{prop}\label{t-Heisenberg}
Let $\mathcal{G}=(G_n,Q_n,U_n)_{n=1}^\infty$, where $G_n$ is the Heisenberg group in (\ref{eq-Heisenberg}), $Q_n$ is the probability uniformly supported on the set $E_n$ in (\ref{eq-HeisenbergE}) and $p_n=n^2\exp\{-n^{\gamma}\}$ with $\gamma>0$. Then, $\mathcal{G}^\mathcal{P}$ has a total variation cutoff if and only if $0<\gamma<1$.
\end{prop}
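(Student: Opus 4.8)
The plan is to push the family $\mathcal{G}^{\mathcal{P}}$ through the reduction scheme of Table~\ref{tab-scheme} and then to carry out the one genuine estimate, namely the orders of the spectral gap and of a total variation mixing time of the $n$th product chain; the condition $0<\gamma<1$ will then pop out of Peres' product condition (equivalently of Theorem~\ref{t-prodmoderate}).

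First I would collect the data. For each $n$, $G_n$ is the Heisenberg group modulo $n+2$ and $E_n$ the symmetric generating set in (\ref{eq-HeisenbergE}) with $id\in E_n$ and $|E_n|=5$, so $Q_n$ is symmetric with $\eta_n=1/5$; by \cite[Lemma~4.1]{DS94} the Cayley graph $(G_n,E_n)$ has diameter $\rho_n\in[n+1,n+4]$ and $G_n$ has $(48,3)$-moderate growth with respect to $E_n$. Hence $\sup_nA_n$ and $\sup_nd_n$ are finite, $\inf_n\eta_n>0$, $\rho_n\to\infty$, and the transition matrix $K_n$ of $Q_n$ is reversible with spectrum in $[-3/5,1]$ (since $id\in E_n$, $|E_n|=5$), so the chains are lazy enough for each equivalence in Table~\ref{tab-scheme} to apply; it therefore suffices to decide when the associated continuous time product chains exhibit a Hellinger distance cutoff. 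I would also record that $q_n=\sum_{i=1}^ni^2e^{-i^\gamma}$ increases to a finite limit $q_\infty>0$ for every $\gamma>0$, so that $p_i/q_n\asymp i^2e^{-i^\gamma}$ and $p_i/(q_n\rho_i^2)\asymp e^{-i^\gamma}$, uniformly in $i\le n$ and in $n$.

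The core is to estimate the spectral gap $\lambda_n$ and a mixing time $T_n$ of the $n$th continuous time product chain $(G,H_t,U)$. Its generator is $\sum_{i=1}^n(p_i/q_n)(K_i-I)$, and moderate growth gives $\mathrm{gap}(K_i)\asymp\rho_i^{-2}\asymp i^{-2}$ with uniform constants, so $\lambda_n=\min_{1\le i\le n}(p_i/q_n)\mathrm{gap}(K_i)\asymp\min_{1\le i\le n}e^{-i^\gamma}=e^{-n^\gamma}$, the minimum being attained near $i=n$. For $T_n$ I would use (\ref{eq-prodhd}) together with the continuous time moderate growth bounds (Proposition~\ref{p-moderatehd}), which for $\rho_i\ge4$ bound $d_{i,H}^{(c)}(s)^2$ above and below by exponentials with rate of order $\rho_i^{-2}$; plugging these into $1-\prod_i(1-a_i)$ and using $1-\sum a_i\le\prod_i(1-a_i)\le e^{-\sum a_i}$ sandwiches $d_{n,H}^{(c)}(t)^2$ between $1-\exp\{-c'f_n(C't)\}$ and $\min\{1,\,C'f_n(c't)\}$ for constants $0<c'\le C'$, where $f_n(t):=\sum_{i=1}^ne^{-e^{-i^\gamma}t}$. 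Then I would analyze $f_n$. Writing $\mu:=e^{-n^\gamma}t$ and $\beta_n:=\gamma n^{\gamma-1}$, using $n^\gamma-(n-k)^\gamma=\beta_n k\,(1+o(1))$ for $k=o(n)$ and discarding the (negligible, near $t\asymp e^{n^\gamma}$) terms with $i=o(n)$ gives $f_n(t)\approx\sum_{k\ge0}e^{-\mu e^{\beta_n k}}$. If $\gamma<1$ then $\beta_n\to0$, this sum is $\approx\beta_n^{-1}\int_0^\infty e^{-\mu e^u}\,du=\beta_n^{-1}E_1(\mu)$ with $E_1$ the exponential integral, and solving $f_n(t)=v$ yields $\mu=(1-\gamma)\log n-\log\log n+O_v(1)$, so $f_n^{-1}(v)\sim(1-\gamma)\,e^{n^\gamma}\log n$ for every fixed $v>0$; hence $T_n\asymp e^{n^\gamma}\log n$. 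If $\gamma\ge1$ then $\beta_n\ge\gamma\ge1$, the sum is dominated by its first term $e^{-\mu}$ once $\mu\gtrsim1$, so $f_n^{-1}(v)\asymp e^{n^\gamma}$ for fixed $v$ of order one and $T_n\asymp e^{n^\gamma}$.

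Combining the two estimates, $\lambda_nT_n\asymp e^{-n^\gamma}\cdot e^{n^\gamma}\log n=\log n\to\infty$ when $0<\gamma<1$, while $\lambda_nT_n\asymp1$ stays bounded when $\gamma\ge1$; and $T_n\to\infty$ in both cases. So Peres' product condition for $\mathcal{G}^{\mathcal{P}}$ holds precisely when $0<\gamma<1$, and since in this framework (Theorem~\ref{p-prodmoderate}, equivalently Theorem~\ref{t-prodmoderate} specialized to $\rho_i\asymp i$ and $p_i=i^2e^{-i^\gamma}$) the total variation cutoff is equivalent to Peres' product condition, the claim follows. The step I expect to be the main obstacle is the analysis of $f_n$ for $\gamma<1$: the crude sandwiching determines $T_n$ only up to a constant factor, which a priori distinguishes a pre-cutoff but not a genuine cutoff, so the conclusion must be routed through the robust equivalence ``cutoff $\Leftrightarrow$ Peres' product condition''; the decisive point is to pin down the extra factor $\log n$ — present exactly when $\gamma<1$ — coming from the many product coordinates whose relaxation times are comparable near the mixing time.
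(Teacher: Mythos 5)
Your argument is correct, but it takes a more computational route than the paper. The paper disposes of this proposition in one line: with $\rho_n\asymp n$ and $p_n=n^2e^{-n^\gamma}$ one has $p_n/\rho_n^2\asymp\ell_n:=e^{-n^\gamma}$, a monotone decreasing sequence, and Corollary \ref{c-prodmoderate} (i.e.\ Theorem \ref{t-prodmoderate}(2) combined with the ratio test of Lemma \ref{l-unln}: $\ell_n/\ell_{n+1}=e^{(n+1)^\gamma-n^\gamma}\to1$ exactly when $\gamma<1$, while $\liminf\ell_n/\ell_{n+1}\ge e>1$ when $\gamma\ge1$) immediately yields the cutoff dichotomy, with $id\in E_n$ transferring it to the discrete-time family. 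You instead estimate the spectral gap ($\asymp e^{-n^\gamma}$) and the mixing time of the product directly, via the sandwich of $d_{n,H}^{(c)}$ by the sum $f_n(t)=\sum_i e^{-e^{-i^\gamma}t}$ and its asymptotics ($T_n\asymp e^{n^\gamma}\log n$ for $\gamma<1$, $T_n\asymp e^{n^\gamma}$ for $\gamma\ge1$), and then conclude through the Peres-condition equivalence (i)$\Leftrightarrow$(v) of Theorem \ref{p-prodmoderate}(2). This essentially re-derives, in this special case, the Laplace-transform analysis that the paper has already packaged into Proposition \ref{p-ltcutoff} and Theorems \ref{p-prodmoderate}--\ref{t-prodmoderate}; what it buys is an explicit identification of where the $\log n$ separation between mixing time and relaxation time comes from, and a sharper description of the cutoff time scale ($\sim(1-\gamma)e^{n^\gamma}\log n$ up to the constants lost in the sandwich), which the corollary route leaves implicit in $u_nq_n$. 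Two small points you gloss over, neither fatal: applying Theorem \ref{p-prodmoderate}(2) verbatim requires reindexing so that $\ell_{n,i}$ is non-decreasing, and then its hypothesis $\min\{\rho_{n,i}\,|\,i\ge m,\,n\ge1\}\ge4$ amounts to all but finitely many \emph{original} coordinates having diameter at least $4$, which is exactly the issue the splitting device in the proof of Theorem \ref{t-prodmoderate} is designed to absorb (your parenthetical appeal to that theorem is the right fix); and the passage between your Hellinger mixing-time estimate and the total variation mixing time appearing in condition (v) should be noted via (\ref{eq-tvhdcomp}), together with $Q_n(id)=1/5>0$ and $T_n\to\infty$ for the discrete/continuous comparison of Proposition \ref{p-dtctcomp}.
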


\begin{prop}\label{t-rp}
Consider the family $\mathcal{G}=(\mathbb{Z}_{n+2},Q_n,U_n)_{n=1}^\infty$, where $Q_n(0)=1/2$ and $Q_n(1)=Q_n(-1)=1/4$. Let $(X_n)_{n=1}^\infty$ be i.i.d. positive random variables and $\mathcal{P}=(p_n)_{n=1}^\infty$ be a random sequence given by $(X_n)_{n=1}^\infty$.
\begin{itemize}
\item[(1)] Suppose $p_n=(X_1+\cdots+X_n)^\gamma$ with $\gamma>0$ and $X_1$ has a finite expectation. For $\gamma\in(0,2]$, $\mathcal{G}^\mathcal{P}$ has a total variation cutoff with probability $1$. For $\gamma>2$, $\mathcal{G}^\mathcal{P}$ has no total variation cutoff with probability $1$.

\item[(2)] Suppose $p_n=X_1\times\cdots\times X_n$ and $\log X_1$ has a positive finite expectation. Then, $\mathcal{G}^\mathcal{P}$ has no total variation cutoff with probability $1$.
\end{itemize}
\end{prop}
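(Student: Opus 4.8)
The plan is to derive Proposition~\ref{t-rp} from the second main theorem, Theorem~\ref{t-prodmoderate}, applied to the base family $\mathcal{G}=(\mathbb{Z}_{n+2},Q_n,U_n)$. For this family the Cayley graph $(\mathbb{Z}_{i+2},\{0,\pm1\})$ has diameter $\rho_i=\lfloor(i+2)/2\rfloor$, so $\rho_i/i\to1/2$ and $\rho_i=\Theta(i)$; each $Q_i$ is the lazy walk, so $\eta_i=1/4>0$ and $\mathbb{Z}_{i+2}$ has $(1,1)$-moderate growth w.r.t.\ $\{0,\pm1\}$. Moreover the $n$-th chain of $\mathcal{G}^\mathcal{P}$ is lazy (its holding probability at $id$ equals $1/2$), so the discrete-time total variation cutoff we must decide is equivalent to the continuous-time one (Proposition~1.1 of \cite{CSal13-1}). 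Feeding the product identity (\ref{eq-prodhd}) and the continuous-time moderate-growth bounds (Proposition~\ref{p-moderatehd}) through the chain of equivalences of Table~\ref{tab-scheme} — and using, for the cycle, its explicit spectrum, so that the $i$-th continuous-time Hellinger distance at time $\sigma$ is comparable, up to constants affecting only the prefactor, to $e^{-\sigma p_i/(q_n\rho_i^2)}$ — Theorem~\ref{t-prodmoderate}, together with the Laplace-transform cutoff criterion of \cite{CHS16}, reduces the existence of a cutoff for $\mathcal{G}^\mathcal{P}$ to the following condition, depending only on $(\rho_i)$ and $\mathcal{P}$: writing $r_i:=p_i/\rho_i^2$, $r_n^*:=\min_{1\le i\le n}r_i$, and $\sigma_n^*$ for the unique $\sigma>0$ with $\sum_{i=1}^n e^{-r_i\sigma}=1$, the family $\mathcal{G}^\mathcal{P}$ has a total variation cutoff if and only if $r_n^*\sigma_n^*\to\infty$. (This is Peres' product condition in disguise: $r_n^*/q_n$ is comparable to the spectral gap of the $n$-th product chain and $q_n\sigma_n^*$ to its mixing time.) It then remains to evaluate $r_n^*\sigma_n^*$ along the two random sequences $\mathcal{P}$.

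For part~(1), the strong law of large numbers gives $S_n:=X_1+\cdots+X_n\sim mn$ almost surely with $m:=\mathbb{E}X_1\in(0,\infty)$; on that event $p_i=S_i^\gamma\sim(mi)^\gamma$, $q_n\sim\frac{m^\gamma}{\gamma+1}n^{\gamma+1}$ and $r_i\sim 4m^\gamma i^{\gamma-2}$. If $\gamma<2$, then $r_i$ is, up to constants, decreasing, $r_n^*\asymp n^{\gamma-2}$, and a Laplace-type estimate of $\sum_{i\le n}e^{-r_i\sigma}$ (comparing it with $n\int_0^1 e^{-cr_n^*\sigma\,\theta^{-(2-\gamma)}}\,d\theta\asymp n\,e^{-cr_n^*\sigma}/(r_n^*\sigma)$ in the relevant range of $\sigma$) yields $\sigma_n^*\asymp(\log n)/r_n^*$, hence $r_n^*\sigma_n^*\asymp\log n\to\infty$ and a cutoff almost surely. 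If $\gamma=2$, then $r_i\to 4m^2$ almost surely, so $r_n^*\asymp1$ and there are $\Theta(n)$ indices $i\le n$ with $r_i\asymp r_n^*$, giving again $\sigma_n^*\asymp\log n$ and $r_n^*\sigma_n^*\asymp\log n\to\infty$, hence a cutoff almost surely. If $\gamma>2$, then $r_i\asymp i^{\gamma-2}$ increases to $\infty$, so $r_n^*=\min_{i\ge1}r_i$ for all large $n$, a value that is almost surely finite and positive; since $\sum_{i\ge1}e^{-r_i\sigma}<\infty$ for every $\sigma>0$, the partial sums $\sum_{i\le n}e^{-r_i\sigma}$ increase to $\sum_{i\ge1}e^{-r_i\sigma}$, forcing $\sigma_n^*$ to converge to a finite limit, so $r_n^*\sigma_n^*$ stays bounded and there is no cutoff almost surely.

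Part~(2) is similar but shorter. Applying the strong law to $\log X_i$ gives $\frac1n\log p_n=\frac1n\sum_{i\le n}\log X_i\to\mu:=\mathbb{E}\log X_1\in(0,\infty)$ almost surely; on that event $p_i=e^{(\mu+o(1))i}$ and, since $\rho_i=\Theta(i)$, $r_i=e^{(\mu+o(1))i}$ grows geometrically. Hence $r_n^*=\min_{i\le n}r_i$ stabilises at $\min_{i\ge1}r_i\in(0,\infty)$, while $\sum_{i\le n}e^{-r_i\sigma}$ increases to the finite sum $\sum_{i\ge1}e^{-r_i\sigma}$ for every $\sigma>0$, so $\sigma_n^*$ converges to a finite limit and $r_n^*\sigma_n^*$ remains bounded: $\mathcal{G}^\mathcal{P}$ has no cutoff almost surely.

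The main obstacle is the first step — checking that Theorem~\ref{t-prodmoderate} applies in the stated form, i.e.\ that the cutoff (equivalently, in this framework, the pre-cutoff) of $\mathcal{G}^\mathcal{P}$ is governed coordinatewise by the single exponent $\Theta(p_i/\rho_i^2)$. This is where the moderate-growth constants must be controlled; in particular one must verify that the (a priori different) rates in the upper and lower bounds of Proposition~\ref{p-moderatehd} do not open a gap between cutoff and pre-cutoff, which for the cycle is clean thanks to its explicit eigenvalues but in general rests on the equivalences of Theorems~\ref{p-prodmoderate} and~\ref{t-prodmoderate}. The remainder — the Laplace and monotone-convergence estimates for $\sum_{i\le n}e^{-r_i\sigma}$, and making the strong-law error terms uniform enough to survive them (most delicately at $\gamma=2$, where $r_i$ is asymptotically but not exactly constant) — is routine.
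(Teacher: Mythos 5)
Your reduction criterion (cutoff iff $r_n^*\sigma_n^*\ra\infty$) is indeed equivalent to the one the paper works with, namely condition (iii) of Theorem \ref{p-prodmoderate}(2) applied to the nondecreasing rearrangement $\ell_{n,1}\le\cdots\le\ell_{n,n}$ of $(p_i/\rho_i^2)_{i\le n}$ with $t_n=\max_i\log(i+1)/\ell_{n,i}$: one always has $\sigma_n^*\le 2t_n$, and $\sigma_n^*\ge(\log 2/\log 3)\,t_n$ unless the maximum defining $t_n$ is attained at $i=1$, in which case $t_n\ell_{n,1}=\log 2$ is bounded anyway. The genuine gap is in how you justify this criterion, and it sits exactly where you yourself locate the ``main obstacle.'' Theorem \ref{t-prodmoderate}, which you invoke, requires $p_n/\rho_n^2\asymp\ell_n$ for a \emph{monotone} sequence with uniform comparability constants. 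For part (2) this hypothesis is not available: the strong law only gives $p_n/\rho_n^2=e^{(\mu+o(1))n}$, i.e.\ comparability up to unbounded multiplicative errors $e^{o(n)}$, and when $\log X_1$ is unbounded below the running minima $\inf_{j\ge i}\log(p_j/p_i)$ need not be bounded uniformly in $i$, so almost surely there may be \emph{no} monotone sequence comparable to $p_n/\rho_n^2$ with $\omega$-wise uniform constants. The paper therefore does not use Theorem \ref{t-prodmoderate} for this example: it goes back to the triangular-array Theorem \ref{p-prodmoderate}(2) applied to the nondecreasing rearrangement (where the comparability hypothesis holds trivially with $C=1$), after splitting off the finitely many cycles of diameter $<4$ exactly as in the proof of Theorem \ref{t-prodmoderate}, via (\ref{eq-prodhd}) and part (1) of Theorem \ref{p-prodmoderate}; this splitting is also needed because in the rearranged order those small cycles can occupy positions where Theorem \ref{p-prodmoderate}(2) demands $\rho_{n,i}\ge 4$. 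Your proposal never carries out this step (nor the translation from the $t_n$ of \cite{CHS16}/Theorem \ref{p-prodmoderate} to your root $\sigma_n^*$ of $\sum_{i\le n}e^{-r_i\sigma}=1$), and declaring it to rest ``on the equivalences of Theorems \ref{p-prodmoderate} and \ref{t-prodmoderate}'' leaves the crux unproven.

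Once the criterion is in place, your strong-law evaluations are correct and essentially coincide with the paper's: for part (2) the paper likewise deduces $\ell_{n,1}\asymp 1$ and $\ell_{n,i}\ge Ce^{C\nu i}$, hence $t_n\ell_{n,1}=O(1)$ and no cutoff; for part (1) your Laplace-type estimates of $\sigma_n^*$ (including the delicate $\gamma=2$ case) can be bypassed entirely, since here $p_n/\rho_n^2\sim\mu^\gamma n^{\gamma-2}$ \emph{is} comparable to a monotone sequence with $\ell_n/\ell_{n+1}\ra 1$, and Corollary \ref{c-prodmoderate} (Theorem \ref{t-prodmoderate} plus Lemma \ref{l-unln}) settles cutoff for $0<\gamma\le 2$ and non-cutoff for $\gamma>2$ from $\sup_n\log n/\ell_n$ alone. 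Finally, your appeal to the explicit spectrum of the cycle to match the rates in the upper and lower Hellinger bounds is unnecessary: the content of Theorem \ref{p-prodmoderate} is precisely that the mismatched rates in Proposition \ref{p-moderatehd} do not affect the existence (as opposed to the precise time) of the cutoff.
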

Proposition \ref{t-Heisenberg} is an immediate result of Corollary \ref{c-prodmoderate}, which shows a phase transition of cutoffs at $\gamma=1$. Proposition \ref{t-rp} is of its own interest and discussed in detail in Subsection \ref{ss-ex}.

\smallskip

The remaining of this paper is organized in the following way. In Section \ref{s-trta}, we introduce the core results of the paper. The main theorems are given in Subsections \ref{ss-tmt} and \ref{ss-tmr}. As an example, we consider the randomized product
and discuss its cutoff in Subsection \ref{ss-ex}. Section \ref{s-ctf} is dedicated to the construction of framework in Section \ref{s-trta}. In Subsection \ref{ss-rots}, we review and develop some theoretical results that are crucial to the equivalences in Table \ref{tab-scheme}, while in Subsection \ref{ss-dof}, Theorem \ref{p-prodmoderate} is proved in detail. To make this paper more readable, we address those minor and involved results in the appendix.

We end the introduction by quoting the following notations. Let $x,y\in\mathbb{R}$ and $a_n,b_n$ be sequences of positive reals. We write $x\vee y$ and $x\wedge y$ for the maximum and minimum of $x,y$. When $a_n/b_n$ is bounded, we write $a_n=O(b_n)$; when $a_n/b_n\ra 0$, we write $a_n=o(b_n)$. In the case of $a_n=O(b_n)$ and $b_n=O(a_n)$, we simply say $a_n\asymp b_n$. If $a_n/b_n\ra 1$, we write $a_n\sim b_n$. In computations, $O(a_n)$ and $o(b_n)$ denote two sequences $c_n$ and $d_n$ satisfying $|c_n/a_n|=O(1)$ and $|d_n/b_n|=o(1)$.

\section{Main theorems and applications}\label{s-trta}

In this section, we will introduce our main results in the general setting and discuss their applications, including Proposition \ref{t-Heisenberg}.

\subsection{Framework and main theorem}\label{ss-tmt}

In this subsection, we introduce the theoretical framework and one of the main theorems
in this article. First, let us consider a concept weaker than cutoff.

\begin{defn}\label{d-precutoff}
Let $\mathcal{F}=(G_n,Q_n,U_n)_{n=1}^\infty$ be a family of random walks on finite groups and $d_{n,\textnormal{\tiny TV}}$ be the total variation of $(G_n,Q_n,U_n)$. $\mathcal{F}$ is said to present a pre-cutoff in the total variation if there are $B>A>0$ and a sequence $t_n>0$ such that
\begin{equation}\label{eq-precutoff}
 \lim_{n\ra\infty}d_{n,\textnormal{\tiny TV}}(\lceil Bt_n\rceil)=0,\quad \liminf_{n\ra\infty}d_{n,\textnormal{\tiny TV}}(\lfloor At_n\rfloor)>0.
\end{equation}
\end{defn}

\begin{rem}
{\rm (1)} The removal of $\lceil\cdot\rceil,\lfloor\cdot\rfloor$ provides the pre-cutoff for $\mathcal{F}_c$ and the replacement of $d_{n,\textnormal{\tiny TV}}$ with $d_{n,H}$ yields the pre-cutoff in the Hellinger distance. When $t_n\ra\infty$, the pre-cutoff in Definition \ref{d-precutoff} is equivalent to
\begin{equation}\label{eq-precutoffmix}
 \lim_{\epsilon\ra 0}\limsup_{n\ra\infty}\frac{T_{n,\textnormal{\tiny TV}}(\epsilon)}{T_{n,\textnormal{\tiny TV}}(\epsilon_0-\epsilon)}<\infty,
\end{equation}
for some $\epsilon_0\in(0,1)$. Such an equivalence also holds for $\mathcal{F}_c$ without the prerequisite of $t_n\ra\infty$.\\
{\rm (2)} Definition \ref{d-precutoff} was introduced for the purpose of studying the mixing times and cutoffs for families of Markov chains. Readers are referred to \cite{SC97,CSal08} for more discussions on this subject. It is worthwhile to note that there is indeed another (stronger) variant of pre-cutoff in \cite[Chapter 18]{LPW08}, of which definition is similar to (\ref{eq-precutoff}) except the replacement of the second limit by
\[
 \lim_{n\ra\infty}d_{n,\textnormal{\tiny TV}}(\lfloor At_n\rfloor)=1.
\]
When $t_n\ra\infty$, the equivalence of the pre-cutoff and (\ref{eq-precutoffmix}) also holds for such a variant with $\epsilon_0=1$. We would like to emphasize that Theorem \ref{p-prodmoderate} (discussed later) remains true when the pre-cutoff refers to the stronger one.
\end{rem}

In the following, we consider a rather general setting than Proposition \ref{t-Heisenberg}. Let $(k_n)_{n=1}^\infty$ be a sequence of positive integers and
\begin{equation}\label{eq-triarray}
 \mathcal{F}=\{(G_{n,i},Q_{n,i},U_{n,i})|1\le i\le k_n,n\ge 1\},\quad\mathcal{P}=\{p_{n,i}|1\le i\le k_n,n\ge 1\},
\end{equation}
where $(G_{n,i},Q_{n,i},U_{n,i})$ is a random walk on a finite group and $p_{n,i}>0$. We write $\mathcal{F}^\mathcal{P}$ for the family $(G_n,Q_n,U_n)_{n=1}^\infty$, where $(G_n,Q_n,U_n)$ is the product of $(G_{n,i},Q_{n,i},U_{n,i})_{i=1}^{k_n}$ according to the probability vector $(p_{n,i}/q_n)_{i=1}^{k_n}$ and $q_n=p_{n,1}+\cdots+p_{n,k_n}$. As before, we use $d_{n,\textnormal{\tiny TV}}^{(c)},T_{n,\textnormal{\tiny TV}}^{(c)}$ to denote the total variation and its mixing time of the $n$th chain in $\mathcal{F}^\mathcal{P}_c$. Along with these notations, we are ready to state the
first main theorem of this article.
\begin{thm}\label{p-prodmoderate}
Refer to the triangular arrays in (\ref{eq-triarray}). Let $E_{n,i}$ be the support of $Q_{n,i}$ and $\rho_{n,i}$ be the diameter of $(G_{n,i},E_{n,i})$. Assume that $Q_{n,i}$ is symmetric, $\inf\{Q_{n,i}(x)|x\in E_{n,i},\,1\le i\le k_n\,n\ge 1\}>0$ and $G_{n,i}$ has $(A,d)$-moderate growth with respect to $E_{n,i}$ for all $n,i$. Assume further that $\rho_{n,1}\ge 4$ for $n$ large enough and there are $C>1$ and $\ell_{n,i}>0$ satisfying $\ell_{n,i}\le \ell_{n,i+1}$ such that $\ell_{n,i}/C\le p_{n,i}/(q_n\rho_{n,i}^2)\le C\ell_{n,i}$ for all $n,i$. By setting $t_n=\max\{\log(i+1)/\ell_{n,i}|1\le i\le k_n\}$, one has:
\begin{itemize}
\item[(1)] If $k_n=O(1)$, then there are $A>0$ and $\sigma_2>\sigma_1>0$ such that
\[
 1-\exp\{-e^{-a\sigma_2}\}\le d_{n,\textnormal{\tiny TV}}^{(c)}(at_n)\le 1-\exp\{-e^{-a\sigma_1}\},\quad\forall a>A,\,n\ge 1.
\]
In particular, $\mathcal{F}^\mathcal{P}_c$ has no pre-cutoff in the total variation.

\item[(2)] If $k_n\ra\infty$ and $\min\{\rho_{n,i}|i\ge m,n\ge 1\}\ge 4$ for $m$ large enough, then the following are equivalent.
\begin{itemize}
\item[(i)] $\mathcal{F}^\mathcal{P}_c$ has a total variation cutoff.

\item[(ii)] $\mathcal{F}^\mathcal{P}_c$ has a total variation pre-cutoff.

\item[(iii)] $t_n\ell_{n,1}\ra\infty$.

\item[(iv)] $T_{n,\textnormal{\tiny TV}}^{(c)}(\epsilon)\ell_{n,1}\ra\infty$ for all $\epsilon\in(0,1)$.

\item[(v)] $T_{n,\textnormal{\tiny TV}}^{(c)}(\epsilon)\ell_{n,1}\ra\infty$ for some $\epsilon\in(0,1)$.
\end{itemize}
Moreover, if $\mathcal{F}^\mathcal{P}_c$ has a total variation cutoff, then $T_{n,\textnormal{\tiny TV}}^{(c)}(\epsilon)\asymp t_n$ and $T_{n,\textnormal{\tiny TV}}^{(c)}(\epsilon)=T_{n,\textnormal{\tiny TV}}^{(c)}(\delta)+O(1/\ell_{n,1})$ for all $0<\epsilon<\delta<1$.
\end{itemize}

When $E_{n,i}$ contains $id$, (2) also holds under the replacement of $\mathcal{F}^\mathcal{P}_c$ and $T_{n,\textnormal{\tiny TV}}^{(c)}$ with $\mathcal{F}^\mathcal{P}$ and $T_{n,\textnormal{\tiny TV}}$.
\end{thm}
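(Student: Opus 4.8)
The plan is to derive the discrete-time assertion from the continuous-time part~(2) of the theorem (already proved for $\mathcal{F}^\mathcal{P}_c$), by means of the comparison of total variation mixing times between a \emph{lazy} random walk and its associated semigroup, i.e.\ the uppermost equivalence of Table~\ref{tab-scheme} (Proposition~1.1 of \cite{CSal13-1}, valid for lazy walks in the form recalled in Subsection~\ref{ss-rots}). The preliminary point is that, when $id\in E_{n,i}$ for all $n,i$, the family $\mathcal{F}^\mathcal{P}$ is uniformly lazy: the standing hypothesis $\eta:=\inf\{Q_{n,i}(x)\mid x\in E_{n,i}\}>0$ forces $Q_{n,i}(id)\ge\eta$, whence
\[
 Q_n(id)=\sum_{i=1}^{k_n}\frac{p_{n,i}}{q_n}\,Q_{n,i}(id)\ge\eta,\qquad\forall\,n .
\]
Since each $Q_{n,i}$ is symmetric, $K_n$ is reversible, so the cited comparison applies uniformly along $\mathcal{F}^\mathcal{P}$, yielding (with constants depending only on $\eta$): (a) $T_{n,\textnormal{\tiny TV}}(\epsilon)\asymp T_{n,\textnormal{\tiny TV}}^{(c)}(\epsilon')$ for a suitable $\epsilon'$; (b) $\mathcal{F}^\mathcal{P}$ has a total variation cutoff, resp.\ pre-cutoff, if and only if $\mathcal{F}^\mathcal{P}_c$ does; and (c) more precisely $T_{n,\textnormal{\tiny TV}}(\epsilon)=T_{n,\textnormal{\tiny TV}}^{(c)}(\epsilon)+O\big(\sqrt{T_{n,\textnormal{\tiny TV}}^{(c)}(\epsilon)}\big)$ up to a harmless change of level.

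With this bridge the five conditions transfer term by term. Condition~(iii) involves only $t_n$ and $\ell_{n,1}$, which are attached to the arrays (\ref{eq-triarray}) and to no chain, so it is literally unchanged; conditions~(i) and~(ii) transfer through (b), using the mixing-time description (\ref{eq-precutoffmix}) of pre-cutoff for~(ii); and conditions~(iv), (v) transfer because multiplying (a) by $\ell_{n,1}$ shows $T_{n,\textnormal{\tiny TV}}(\epsilon)\ell_{n,1}\ra\infty$ for some (all) $\epsilon$ iff $T_{n,\textnormal{\tiny TV}}^{(c)}(\epsilon')\ell_{n,1}\ra\infty$ for some (all) $\epsilon'$, the level shift being absorbed by the continuous-time equivalence (iv)$\Leftrightarrow$(v). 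Chaining these identifications with the continuous-time equivalences of part~(2) closes the cycle for $\mathcal{F}^\mathcal{P}$. The prerequisite $T_{n,\textnormal{\tiny TV}}(\epsilon_0)\ra\infty$ implicit in the discrete-time notion of cutoff is automatic once (iii) holds: then $t_n\ell_{n,1}\ra\infty$, while $\ell_{n,1}\le Cp_{n,1}/(q_n\rho_{n,1}^2)\le C/16$ for $n$ large by $\rho_{n,1}\ge4$, so $t_n\ra\infty$ and $T_{n,\textnormal{\tiny TV}}(\epsilon_0)\asymp T_{n,\textnormal{\tiny TV}}^{(c)}(\epsilon_0')\asymp t_n$.

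Finally, under a cutoff the quantitative conclusions survive: $T_{n,\textnormal{\tiny TV}}(\epsilon)\asymp t_n$ is immediate from $T_{n,\textnormal{\tiny TV}}(\epsilon)\asymp T_{n,\textnormal{\tiny TV}}^{(c)}(\epsilon')\asymp t_n$, and for $T_{n,\textnormal{\tiny TV}}(\epsilon)=T_{n,\textnormal{\tiny TV}}(\delta)+O(1/\ell_{n,1})$ one only has to check that the $O(\sqrt{t_n})$ error introduced by (c) is $O(1/\ell_{n,1})$. This is where $k_n\ra\infty$ enters decisively: since $(\ell_{n,i})_i$ is nondecreasing and, for $i\ge m$ with $m$ fixed large, $p_{n,i}/q_n$ is at least $C^{-1}\ell_{n,i}\rho_{n,i}^2\ge 16C^{-1}\ell_{n,m}$, summing over $m\le i\le k_n$ against $\sum_i p_{n,i}/q_n=1$ gives $\ell_{n,1}\le\ell_{n,m}=O(1/k_n)$, while $\ell_{n,i}\ge\ell_{n,1}$ yields $t_n\le\log(k_n+1)/\ell_{n,1}$; together these force $\sqrt{t_n}=O(1/\ell_{n,1})$, so the error is absorbed and the continuous-time identity upgrades to the discrete-time one. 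The only genuinely delicate ingredient in all of this is the sharp lazy-versus-continuous comparison, with its explicit $O(\sqrt{\cdot})$ error and a version covering pre-cutoff, which is supplied by the development in Subsection~\ref{ss-rots}; everything else is bookkeeping on top of part~(2).
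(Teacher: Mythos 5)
Your proposal addresses only the final sentence of the theorem (the discrete-time extension) and takes the continuous-time statements (1) and (2) for $\mathcal{F}^\mathcal{P}_c$ as ``already proved.'' Those continuous-time parts are the core of the theorem and of the paper's argument: they require the Hellinger-distance bounds of Proposition \ref{p-moderatehd}, the product formula and the two-sided comparison of Lemma \ref{l-prodhd}, the Laplace-transform cutoff criterion (Proposition \ref{p-ltcutoff}) applied to $f_n(t)=\sum_i e^{-\ell_{n,i}t}$, and the quasi-submultiplicativity of the Hellinger distance (Lemma \ref{l-hdsub}) for the window estimate $T_{n,H}^{(c)}(\epsilon)-T_{n,H}^{(c)}(\delta)=O(1/\ell_{n,1})$. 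None of this appears in your write-up, so as a proof of the stated theorem it is fundamentally incomplete.

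Even granting the continuous-time part, the discrete-time transfer has genuine gaps. Your step (a), a uniform two-sided comparability $T_{n,\textnormal{\tiny TV}}(\epsilon)\asymp T_{n,\textnormal{\tiny TV}}^{(c)}(\epsilon')$ with constants depending only on the laziness, and your step (c), an $O\big(\sqrt{T^{(c)}}\big)$ window comparison, are not contained in the results recalled in Subsection \ref{ss-rots}: Proposition \ref{p-dtctcomp} compares mixing times only \emph{after} a cutoff is known and under the divergence hypothesis $T_{n,\textnormal{\tiny TV}}(\epsilon_0)\ra\infty$ or $T_{n,\textnormal{\tiny TV}}^{(c)}(\epsilon_0)\ra\infty$, and Lemma \ref{l-dtctcomp} gives only limit statements along sequences tending to infinity. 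Consequently the equivalence of (iv)--(v) with their discrete analogues cannot be obtained by ``multiplying (a) by $\ell_{n,1}$''; the paper instead proves (i)$'\Ra$(iv)$'$ via the spectral lower bound $d_{n,\textnormal{\tiny TV}}(m)\ge e^{-m(1-\mu_n)}\ge e^{-CC_3m\ell_{n,1}}$ (using the moderate-growth estimate $1-\mu_{n,1}\le C_3/\rho_{n,1}^2$), and proves (v)$'\Ra$(v) by a contradiction argument combining submultiplicativity of $2d_{\textnormal{\tiny TV}}$ with Lemma \ref{l-dtctcomp}(3). Moreover, you verify the divergence prerequisite for Proposition \ref{p-dtctcomp} only \emph{under} condition (iii); since pre-cutoff (Definition \ref{d-precutoff}) does not presuppose diverging mixing times, the implication (ii)$'\Ra$(iii) is not covered by your argument. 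The paper removes this obstruction by showing unconditionally, from $k_n\ra\infty$, $\rho_{n,i}\ge 4$ and the product structure, that $T_{n,\textnormal{\tiny TV}}^{(c)}(1/4)\ge 8(k_n-N)/C_2\ra\infty$ and hence $T_{n,\textnormal{\tiny TV}}(1/5)\ra\infty$. Your closing computation showing $\sqrt{t_n}=O(1/\ell_{n,1})$ is arithmetically fine but unnecessary: once a cutoff holds, (iii) gives $1/\ell_{n,1}=o(t_n)$, so the window already transfers through Proposition \ref{p-dtctcomp} with $b_n=1/\ell_{n,1}$, without any $\sqrt{t_n}$ bookkeeping.
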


Theorem \ref{p-prodmoderate} is built on a list of theoretical results in Subsection \ref{ss-rots}. As its proof is a little complicated, we leave it to Subsection \ref{ss-dof}. In the following, we provide some remarks to comment the importance of Theorem \ref{p-prodmoderate}.

\begin{rem}\label{r-Peres}
{\rm (1)} Note that Theorem \ref{p-prodmoderate} also holds in the Hellinger distance due to (\ref{eq-tvhdcomp}) and Proposition \ref{p-tvhdcomp} and this is exactly what is done in the proof of the continuous time case. See Subsection \ref{ss-dof} for details.\\
{\rm (2)} In the proof of Theorem \ref{p-prodmoderate}, we obtain the order of the cutoff times, which is the same as $t_n$, but could not determine its asymptotic value, which relies on a more precise estimation of the convergence rate in the Helinger distance in Proposition \ref{p-moderatehd}.\\
{\rm (3)} A conjectured condition for the existence of cutoffs was introduced by Peres in 2004, which says that
\begin{equation}\label{eq-Peres}
 \text{A cutoff exists}\quad\Lra\quad\text{Mixing time}\times\text{Spectral gap}\ra\infty.
\end{equation}
In the setting of Theorem \ref{p-prodmoderate}, the spectral gaps of the $n$th random walks in $\mathcal{F}^\mathcal{P}$ and $\mathcal{F}^\mathcal{P}_c$ are of the same order as $\ell_{n,1}$ due to the assumption of $\inf\{Q_{n,i}(x)|x\in E_{n,i},\,1\le i\le k_n\,n\ge 1\}>0$. Consequently, the equivalence of (i) and (v) in (2) confirms the conjecture in (\ref{eq-Peres}) for products of random walks on finite groups with moderate growth.
\end{rem}

\subsection{Applications}\label{ss-tmr}

In this subsection, we apply Theorem \ref{p-prodmoderate} to the specific type of products introduced in \cite{CK16} and derive conditions on their cutoffs. Let $\mathcal{G}=(G_n,Q_n,U_n)_{n=1}^\infty$ be a family of random walks on finite groups driven by symmetric probabilities and $\mathcal{P}=(p_n)_{n=1}^\infty$ be a sequence of positive reals. Throughout this subsection, we write $\mathcal{G}^\mathcal{P}$ for the family, of which $n$th random walk refers to the product of $(G_i,Q_i,U_i)_{i=1}^n$ with respect to the probability vector $(p_1/q_n,...,p_n/q_n)$, where $q_n=p_1+\cdots+p_n$.
We now state the second main theorem of this article.

\begin{thm}\label{t-prodmoderate}
Consider the family $\mathcal{G}^\mathcal{P}$ introduced above. Let $E_n$ be the support of $Q_n$ and $\rho_n$ be the diameter of $(G_n,E_n)$. Assume that $G_n$ has $(A,d)$-moderate growth with respect to $E_n$, $\inf\{Q_n(x)|x\in E_n,\,n\ge 1\}>0$, $\rho_n\ge 4$ for $n$ large enough and $p_n/\rho_n^2\asymp \ell_n$ for some sequence $(\ell_n)_{n=1}^\infty$.
\begin{itemize}
\item[(1)] If $\ell_n\le\ell_{n+1}$ and $u_n=\max\{\log(i+1)/\ell_i|1\le i\le n\}$, then $\mathcal{G}^\mathcal{P}_c$ has a total variation cutoff if and only if $u_n\ra\infty$.

\item[(2)] If $\ell_n\ge\ell_{n+1}$ and $u_n=\max\{\log(i+1)/\ell_{n-i+1}|1\le i\le n\}$, then $\mathcal{G}^\mathcal{P}_c$ has a total variation cutoff if and only if $u_n\ell_n\ra\infty$.
\end{itemize}
In either case of (1) and (2), if $\mathcal{G}^\mathcal{P}_c$ has a total variation cutoff, then the cutoff time is of order $u_nq_n$. Further, if $E_n$ contains $id$ for all $n\ge 1$, then all above also holds for $\mathcal{G}^\mathcal{P}$.
\end{thm}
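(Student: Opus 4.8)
The plan is to recognize $\mathcal{G}^\mathcal{P}$ as an instance of the triangular array of Theorem~\ref{p-prodmoderate}. First I would put $k_n=n$, $(G_{n,i},Q_{n,i},U_{n,i})=(G_i,Q_i,U_i)$ and $p_{n,i}=p_i$ for $1\le i\le n$, so that $\mathcal{F}^\mathcal{P}$ is exactly $\mathcal{G}^\mathcal{P}$, $q_n=p_1+\cdots+p_n$, $\rho_{n,i}=\rho_i$, and the hypotheses on symmetry, on $\inf_x Q_i(x)>0$ and on $(A,d)$-moderate growth carry over verbatim. Since $p_i/\rho_i^2\asymp\ell_i$ one gets $p_{n,i}/(q_n\rho_{n,i}^2)\asymp\ell_i/q_n$, so Theorem~\ref{p-prodmoderate} will apply once two points are arranged: the $\ell$-array used there must be \emph{nondecreasing in the factor index}, and one needs $\rho_{n,1}\ge4$ together with $\min\{\rho_{n,i}:i\ge m,\,n\ge1\}\ge4$ for large $m$. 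The first point is the reason the statement splits into the two cases; the second requires genuine work and is treated last.

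\emph{The two cases, assuming $\rho_n\ge4$ for every $n$.} In case~(1), $\ell_n\le\ell_{n+1}$ makes $\widetilde\ell_{n,i}:=\ell_i/q_n$ nondecreasing in $i$ and comparable to $p_{n,i}/(q_n\rho_{n,i}^2)$, so Theorem~\ref{p-prodmoderate}(2) applies after adjusting the constant $C$. Its quantity $t_n$ equals
\[
 t_n=\max_{1\le i\le n}\frac{\log(i+1)}{\widetilde\ell_{n,i}}=q_n\max_{1\le i\le n}\frac{\log(i+1)}{\ell_i}=q_n u_n ,
\]
and $\widetilde\ell_{n,1}=\ell_1/q_n$, so $t_n\widetilde\ell_{n,1}=\ell_1u_n$. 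Hence, by Theorem~\ref{p-prodmoderate}(2), $\mathcal{G}^\mathcal{P}_c$ has a total variation cutoff if and only if $t_n\widetilde\ell_{n,1}\to\infty$, i.e.\ if and only if $u_n\to\infty$, and then $T_{n,\textnormal{\tiny TV}}^{(c)}(\epsilon)\asymp t_n=q_n u_n$. In case~(2), $\ell_n\ge\ell_{n+1}$ makes $\ell_i/q_n$ nonincreasing in $i$, so we first relabel the factors of the $n$th product in reverse order; this is legitimate because a product chain is unchanged under a simultaneous permutation of the factors and of the weights, so the $n$th chain equals the product of $(G_{n-i+1},Q_{n-i+1},U_{n-i+1})_{i=1}^n$ with weights $p_{n-i+1}/q_n$. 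Now $\widetilde\ell_{n,i}:=\ell_{n-i+1}/q_n$ is nondecreasing in $i$, the associated $t_n$ equals $q_n\max_{1\le i\le n}\log(i+1)/\ell_{n-i+1}=q_n u_n$ with $u_n$ as in~(2), and $\widetilde\ell_{n,1}=\ell_n/q_n$, so $t_n\widetilde\ell_{n,1}=u_n\ell_n$; Theorem~\ref{p-prodmoderate}(2) then gives a cutoff if and only if $u_n\ell_n\to\infty$, again with cutoff time of order $q_n u_n$.

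\emph{Removing finitely many factors, and the discrete case.} The bound $\rho_n\ge4$ is only assumed for large $n$, so there are at most finitely many ``bad'' factors $G_1,\dots,G_{N_0-1}$ with $\rho_j<4$, and $\inf_x Q_j(x)>0$ forces these to have uniformly bounded order. In case~(1) one needs $\rho_{n,1}=\rho_1\ge4$, and in case~(2), after reversal, $\min\{\rho_{n,i}:i\ge m\}=\min\{\rho_j:1\le j\le n-m+1\}$ involves every bad $\rho_j$; so in both cases we excise $G_1,\dots,G_{N_0-1}$. Let $\mathcal{G}^{\mathcal{P},\mathrm{r}}$ be the family whose $n$th chain is the product of $(G_i,Q_i,U_i)_{i=N_0}^n$ with weights $p_i/q_n^{\mathrm{r}}$, $q_n^{\mathrm{r}}=p_{N_0}+\cdots+p_n$; writing $d_{n,H}^{(c),\mathrm{r}}$ for the Hellinger distance of its $n$th chain, the product identity~(\ref{eq-prodhd}) together with the clock change $t\mapsto(q_n^{\mathrm{r}}/q_n)t$ yields
\[
 d_{n,H}^{(c)}(t)^2=1-\Big(1-d_{n,H}^{(c),\mathrm{r}}\big(\tfrac{q_n^{\mathrm{r}}}{q_n}t\big)^{2}\Big)\prod_{j=1}^{N_0-1}\big(1-d_{j,H}^{(c)}(p_jt/q_n)^2\big).
\]
Each $d_{j,H}^{(c)}$ is continuous on $[0,\infty)$ with limit $0$, so $c_j:=\inf_{s\ge0}(1-d_{j,H}^{(c)}(s)^2)>0$ and the last product lies in $[\prod_j c_j,\,1]$; moreover $q_n^{\mathrm{r}}\asymp q_n$, and whenever a cutoff is present (which in either case forces $u_n\to\infty$) one has $p_jt/q_n\to\infty$, hence $d_{j,H}^{(c)}(p_jt/q_n)\to0$, at times $t$ of order $q_n u_n$. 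Feeding these facts into the identity sandwiches $d_{n,H}^{(c)}$ between fixed rescalings of $d_{n,H}^{(c),\mathrm{r}}$ and shows that $\mathcal{G}^\mathcal{P}_c$ has a total variation cutoff exactly when $\mathcal{G}^{\mathcal{P},\mathrm{r}}_c$ does, then with cutoff times of the same order; since the reduced analogues of $u_n$ and $q_n$ are easily seen to be comparable to $u_n$ and $q_n$ in both monotonicity regimes, the previous paragraph applied to $\mathcal{G}^{\mathcal{P},\mathrm{r}}$ (all of whose factor diameters are now $\ge4$) together with~(\ref{eq-tvhdcomp}) completes the continuous-time statement. When $id\in E_n$ for every $n$, the closing sentence of Theorem~\ref{p-prodmoderate} lets us rerun the whole argument with $\mathcal{F}^\mathcal{P},T_{n,\textnormal{\tiny TV}}$ in place of $\mathcal{F}^\mathcal{P}_c,T_{n,\textnormal{\tiny TV}}^{(c)}$. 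The main obstacle is precisely this last reduction: setting up the sandwich above and verifying that discarding a bounded number of bounded factors changes neither the existence of the cutoff nor the order of the cutoff time, uniformly over the two cases.
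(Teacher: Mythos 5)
Your main line of argument is essentially the paper's: reduce to the Hellinger distance of the continuous-time chains, split off the finitely many factors of diameter less than $4$, apply Theorem \ref{p-prodmoderate}(2) to the remaining product (with the factors relabelled in reverse order in case (2), exactly as the paper does through its quantity $v_n$), combine via the product identity (\ref{eq-prodhd}), and check that the truncated analogues of $u_n,q_n$ are comparable to $u_n,q_n$. The only substantive difference is cosmetic: the paper lumps the discarded head into a single fixed product chain and controls it with the two-sided bound of Theorem \ref{p-prodmoderate}(1) (its display (\ref{eq-dh})), whereas you keep those factors separate and use the elementary facts that $\inf_{s\ge0}\bigl(1-d_{j,H}^{(c)}(s)^2\bigr)>0$ and $d_{j,H}^{(c)}(s)\to0$; both work, and your converse direction is in fact slightly more economical since it avoids needing $w_n'/q_n\to\infty$.

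One caveat concerns your last sentence, the passage to $\mathcal{G}^\mathcal{P}$ when $id\in E_n$. You cannot literally ``rerun the whole argument'' in discrete time: the excision sandwich rests on (\ref{eq-prodhd}), which is derived from the tensor structure (\ref{eq-prodcts}) of the continuous-time semigroup and, as the paper emphasizes, has no discrete-time analogue (the marginals of the discrete product chain are binomially thinned, not powers of $K_i$). The closing sentence of Theorem \ref{p-prodmoderate} only covers the truncated family, not the sandwich. The correct (and the paper's) route is to transfer the already-proved continuous-time statement for the full family $\mathcal{G}^\mathcal{P}_c$ to $\mathcal{G}^\mathcal{P}$ via Proposition \ref{p-dtctcomp}: laziness follows from $id\in E_n$ and $\inf_n Q_n(id)>0$, and the prerequisite $T_{n,\textnormal{\tiny TV}}^{(c)}(\epsilon_0)\to\infty$ holds because the number of factors tends to infinity (as verified in the discrete-time part of the proof of Theorem \ref{p-prodmoderate}); Proposition \ref{p-dtctcomp} also gives $T_{n,\textnormal{\tiny TV}}(\epsilon)\sim T_{n,\textnormal{\tiny TV}}^{(c)}(\epsilon)$, so the order $u_nq_n$ of the cutoff time carries over. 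With that one-line repair your proposal is complete.
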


\begin{rem}
The lower bound of the graph diameter (at least $4$) in Theorem \ref{t-prodmoderate} is due to the requirement in Proposition \ref{p-moderatehd}. As the product of finitely many random walks has negligible contribution to the total variation (see e.g. Theorem \ref{p-prodmoderate}(1) for an illustration), one may suitably relax such a restriction on graph diameters as in Theorem \ref{t-prodmoderate}.
\end{rem}

\begin{proof}[Proof of Theorem \ref{t-prodmoderate}]
By Propositions \ref{p-dtctcomp} and \ref{p-tvhdcomp} in the next section, it suffices to prove this theorem for $\mathcal{G}^\mathcal{P}_c$ in the Hellinger distance. In the following, we will discuss (2), while (1) can be treated in a similar way.

Let $n_0>0$ be an integer such that $\rho_n\ge 4$ for $n\ge n_0$. For $n>n_0$, let $(G,Q,U)$ and $(G_n^{\mathcal{R}},Q_n^{\mathcal{R}},U_n^{\mathcal{R}})$ be products of $(G_i,Q_i,U_i)_{i=1}^{n_0}$ and $(G_i,Q_i,U_i)_{i=n_0+1}^n$ with respect to the probability vectors $(p_i/q)_{i=1}^{n_0}$ and $(p_i/q_n^{\mathcal{R}})_{i=n_0+1}^n$, where $q=p_1+\cdots+p_{n_0}$ and $q_n^{\mathcal{R}}=p_{n_0+1}+\cdots+p_n$. Clearly, the $n$th random walk in $\mathcal{G}^\mathcal{P}$ is the product of $(G,Q,U)$ and $(G_n^{\mathcal{R}},Q_n^{\mathcal{R}},U_n^{\mathcal{R}})$ with respect to the probability vector $(q/q_n,q_n^{\mathcal{R}}/q_n)$. Let $d_H$, $d_{n,H}^{\mathcal{R}}$ and $d_{n,H}^{(c)}$ be the Hellinger distances of the continuous time random walks associated with $(G,Q,U)$, $(G_n^{\mathcal{R}},Q_n^{\mathcal{R}},U_n^{\mathcal{R}})$ and the $n$th random walk in $\mathcal{G}^\mathcal{P}$. By (\ref{eq-prodhd}), one has
\begin{equation}\label{eq-dnh}
 d_{n,H}^{(c)}(t)^2=1-\left(1-d_H(qt/q_n)^2\right)
 \left(1-d_{n,H}^{\mathcal{R}}\left(q_n^\mathcal{R}t/q_n\right)^2\right).
\end{equation}

For the family $\mathcal{H}:=(G_n^{\mathcal{R}},Q_n^{\mathcal{R}},U_n^{\mathcal{R}})_{n=n_0+1}^\infty$ and the random walk $(G,Q,U)$, we set
\[
 v_n=q_n^{\mathcal{R}}\max_{1\le i\le n-n_0}\frac{\log(i+1)}{\ell_{n-i+1}},\quad v=q\max_{1\le i\le n_0}\frac{\log(i+1)}{\ell_{n_0-i+1}}.
\]
By Theorem \ref{p-prodmoderate}(1), there are constants $A>0$ and $\sigma_2>\sigma_1>0$ such that
\begin{equation}\label{eq-dh}
 1-\exp\left\{-e^{-a\sigma_2}\right\}\le d_H(av)\le 1-\exp\left\{-e^{-a\sigma_1}\right\},\quad\forall a>A,
\end{equation}
and, by Theorem \ref{p-prodmoderate}(2), $\mathcal{H}_c$ has a cutoff in the Hellinger distance if and only if $v_n\ell_n/q_n^\mathcal{R}\ra\infty$. Observe that, for $n>n_0$,
\[
 \frac{v_n}{q_n^\mathcal{R}}\le u_n\le\frac{v_n}{q_n^\mathcal{R}}+\max_{n-n_0<i\le n}\frac{\log(i+1)}{\ell_{n-i+1}}\le\frac{v_n}{q_n^\mathcal{R}}
 \left(1+\frac{\log(n+1)}{\log(n-n_0+1)}\right).
\]
This implies $v_n/q_n^\mathcal{R}\asymp u_n$.

We are now ready to derive (2). Assume that $u_n\ell_n\ra\infty$ or equivalently $v_n\ell_n/q_n^\mathcal{R}\ra\infty$. By Theorem \ref{p-prodmoderate}(2), $\mathcal{H}_c$ has a cutoff in the Hellinger distance and the cutoff time $w_n$ satisfies $w_n\asymp v_n$.
Immediately, this implies
\[
 \lim_{n\ra\infty}d_{n,H}^\mathcal{R}(aw_n)=\begin{cases}0&\text{for }a>1,\\1&\text{for }0<a<1,\end{cases}
 \quad\lim_{n\ra\infty}d_H\left(bw_n/q_n^\mathcal{R}\right)=0,\quad\forall b>0,
\]
where the second limit results from the second inequality of (\ref{eq-dh}) and the observation of $u_n\ge\log(n+1)/\ell_1\ra\infty$. Applying the above computations to (\ref{eq-dnh}) yields
\[
 \lim_{n\ra\infty}d_{n,H}^{(c)}\left(aq_nw_n/q_n^\mathcal{R}\right)
 =\begin{cases}0&\text{for }a>1,\\1&\text{for }0<a<1.\end{cases}
\]
This proves that $\mathcal{G}^\mathcal{P}_c$ has a cutoff in the Hellinger distance and the cutoff time is of order $u_nq_n$. Conversely, assume that $\mathcal{G}^\mathcal{P}_c$ has a cutoff in the Hellinger distance with cutoff time $w_n'$. Then, for $a>1$,
\[
 \lim_{n\ra\infty}d_{n,H}^\mathcal{R}\left(aq_n^\mathcal{R}w_n'/q_n\right)=0,\quad
 \lim_{n\ra\infty}d_H(aqw_n'/q_n)=0.
\]
By the first inequality of (\ref{eq-dh}), the latter limit implies $w_n'/q_n\ra\infty$, which yields $d_H(bw_n'/q_n)\ra 0$ for all $b>0$. In addition with the cutoff for $\mathcal{G}^\mathcal{P}_c$, we may derive from (\ref{eq-dnh}) that, for $0<a<1$,
\[
 1=\lim_{n\ra\infty}d_{n,H}^{(c)}(aw_n')=\lim_{n\ra\infty}
 d_{n,H}^\mathcal{R}\left(aq_n^\mathcal{R}w_n'/q_n\right).
\]
As a consequence, $\mathcal{H}_c$ has a cutoff in the Hellinger distance or equivalently $u_n\ell_n\asymp v_n\ell_n/q_n^\mathcal{R}\ra\infty$, as desired.
\end{proof}

\begin{rem}
Theorem \ref{t-prodmoderate}(1) can be in fact proved in a more direct way. Consider the exchange of the first random walk in $\mathcal{G}$ and the first random walk of which graph diameter is at least $4$, say the $N$th random walk. For the new family, all assumptions in Theorem \ref{p-prodmoderate} are fulfilled except the monotonicity of the sequence $\{\ell_N,\ell_2,...,\ell_{N-1},\ell_1,\ell_{N+1},...\}$. Such a concerning can be eliminated by using the original sequence $(\ell_n)_{n=1}^\infty$ along with a larger multiplicative constant and its reciprocal to bound the sequence $\{p_N/\rho_N^2,p_2/\rho_2^2,...,p_{N-1}/\rho_{N-1}^2,p_1/\rho_1^2,
p_{N+1}/\rho_{N+1}^2,...\}$. Under the above construction, Theorem \ref{t-prodmoderate} follows immediately from Theorem \ref{p-prodmoderate}(2).
\end{rem}

The following lemma is auxiliary to Theorem \ref{t-prodmoderate}, which provides conditions on the boundedness of $u_n$ and $u_n\ell_n$.

\begin{lem}\label{l-unln}
Let $\ell_n,u_n$ be constants in Theorem \ref{t-prodmoderate}.
\begin{itemize}
\item[(1)] If $\ell_n\le \ell_{n+1}$, then
\[
 u_n\ra\infty \quad\Lra\quad \sup_{n\ge 1}\frac{\log n}{\ell_n}=\infty.
\]

\item[(2)] Assume $\ell_n\ge\ell_{n+1}$. If $\ell_n/\ell_{n+1}\ra 1$, then $u_n\ell_n\ra\infty$. If $\liminf\limits_{n\ra\infty}\ell_n/\ell_{n+1}>1$, then $u_n\ell_n=O(1)$.
\end{itemize}
\end{lem}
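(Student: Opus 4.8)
The plan is to handle the two parts by rather different arguments: part~(1) is a monotonicity and book-keeping statement, while part~(2) rests on choosing correctly the index at which the maximum defining $u_n$ is essentially attained. For~(1), first note that $u_n=\max_{1\le i\le n}\log(i+1)/\ell_i$ is nondecreasing in $n$, being a maximum over a growing index set, so $u_n\to\infty$ iff $\sup_{i\ge1}\log(i+1)/\ell_i=\infty$ (the monotonicity of $\ell_n$ plays no role here). It then remains to compare $\sup_{i\ge1}\log(i+1)/\ell_i$ with $\sup_{n\ge1}\log n/\ell_n$: for $i\ge2$ one has $\log i\le\log(i+1)\le 2\log i$ because $i+1\le i^2$, hence $\log(i+1)/\ell_i\asymp\log i/\ell_i$ uniformly over $i\ge2$, while the term $i=1$ contributes only the finite constant $\log2/\ell_1$ on the left and $\log1/\ell_1=0$ on the right. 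Thus one supremum is infinite iff the other is, which is the desired equivalence.

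For~(2), reindexing by $j=n-i+1$ turns $u_n$ into $\max_{1\le j\le n}\log(n-j+2)/\ell_j$, and since $\ell$ is nonincreasing, $\ell_n/\ell_j\le1$ for $j\le n$, so $u_n\ell_n=\max_{1\le j\le n}(\ell_n/\ell_j)\log(n-j+2)$. Assume first $\ell_n/\ell_{n+1}\to1$. Given $M>0$, choose an integer $m$ with $\tfrac12\log(m+2)\ge M$; for this \emph{fixed} $m$, write $\ell_n/\ell_{n-m}=\prod_{k=n-m}^{n-1}\ell_{k+1}/\ell_k$, a product of $m$ factors each tending to $1$ as $n\to\infty$, so $\ell_n/\ell_{n-m}\ge\tfrac12$ once $n$ is large. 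Evaluating the maximum at $j=n-m$ (i.e.\ $i=m+1$) then gives $u_n\ell_n\ge(\ell_n/\ell_{n-m})\log(m+2)\ge M$ for all large $n$, and since $M$ is arbitrary, $u_n\ell_n\to\infty$.

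Assume now $\liminf_n\ell_n/\ell_{n+1}>1$: fix $\delta>0$ and $N$ with $\ell_{k+1}/\ell_k\le(1+\delta)^{-1}$ for all $k\ge N$, so that $\ell_n/\ell_j\le(1+\delta)^{-(n-j)}$ whenever $N\le j\le n$ and $\ell_n\le\ell_N(1+\delta)^{-(n-N)}$ for $n\ge N$. Split the maximum defining $u_n\ell_n$ into $j\ge N$ and $j<N$. Each term with $j\ge N$ is at most $(1+\delta)^{-(n-j)}\log(n-j+2)\le\sup_{t\ge0}(1+\delta)^{-t}\log(t+2)<\infty$; each of the finitely many terms with $j<N$ is at most $(\ell_N/\min_{j<N}\ell_j)\,(1+\delta)^{-(n-N)}\log(n+1)\le(\ell_N/\min_{j<N}\ell_j)\sup_{n\ge N}(1+\delta)^{-(n-N)}\log(n+1)<\infty$. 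Hence $u_n\ell_n=O(1)$.

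The step I expect to be the real obstacle is the first implication of part~(2): the naive strategy of witnessing $u_n\ell_n\to\infty$ through an index $j$ growing proportionally to $n$ fails, because $\ell_n/\ell_j$ can decay geometrically in $n-j$ and overwhelm the only logarithmic gain from $\log(n-j+2)$; the correct move is to stay in a window of fixed width $m$ at the top of the range, where the hypothesis $\ell_n/\ell_{n+1}\to1$ forces $\ell_n/\ell_{n-m}\to1$ for every fixed $m$. Everything else is routine, the only mild care being the $\log(i+1)$-versus-$\log n$ discrepancy in part~(1), which is harmless.
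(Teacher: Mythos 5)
Your proof is correct and follows essentially the same route as the paper: for the lower bound in (2) you witness the maximum at a fixed offset from the top index where $\ell_n/\ell_{n-m}\to 1$, and for the upper bound you use the geometric decay of the ratios to dominate the logarithm, exactly as in the paper's argument (which bounds $u_n\ell_n\le M^N\sup_{i\ge1}\log(i+1)/M^i$). Part (1) is dismissed as obvious in the paper; your explicit comparison of the two suprema is a harmless elaboration of the same observation.
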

\begin{proof}
(1) is obvious from the definition of $u_n$. For (2), we first consider the case $\ell_n/\ell_{n+1}\ra 1$. Note that, for $m\ge 1$,
\[
 \liminf_{n\ra\infty}u_n\ell_n\ge \liminf_{n\ra\infty}\frac{\log(m+1)}{\ell_{n-m+1}/\ell_n}=\log(m+1).
\]
Letting $m$ tend to infinity gives the desired limit. Next, we consider the case $\liminf\limits_{n\ra\infty}\ell_n/\ell_{n+1}>1$ and choose $N>0$ and $M>1$ such that $\ell_n/\ell_{n+1}\ge M$ for $n\ge N$. Immediately, this implies that $\ell_{n-m}/\ell_n\ge M^{m-N+1}$ for all $0\le m<n$ and $n\ge 1$. As a result, one has
\[
 u_n\ell_n=\max_{1\le i\le n}\frac{\log(i+1)}{\ell_{n-i+1}/\ell_n}\le M^N\sup_{i\ge 1}\frac{\log(i+1)}{M^i}<\infty.
\]
\end{proof}

The following corollary is a combination of Theorem \ref{t-prodmoderate} and Lemma \ref{l-unln}, of which proof is obvious and skipped.

\begin{cor}\label{c-prodmoderate}
Let $\mathcal{G}^\mathcal{P},E_n,\rho_n$ be as in Theorem \ref{t-prodmoderate}. Assume that $G_n$ has $(A,d)$-moderate growth with respect to $E_n$, $\inf\{Q_n(x)|x\in E_n,\,n\ge 1\}>0$, $\rho_n\ra\infty$ and $p_n\asymp \rho_n^2\ell_n$ for some monotonic sequence $(\ell_n)_{n=1}^\infty$.

\begin{itemize}
\item[(1)] If $\sup\{\log n/\ell_n|n\ge 1\}<\infty$ or $\liminf\limits_{n\ra\infty}\ell_n/\ell_{n+1}>1$, then $\mathcal{G}^\mathcal{P}_c$ has no total variation cutoff.

\item[(2)] If $\sup\{\log n/\ell_n|n\ge 1\}=\infty$ and $\lim\limits_{n\ra\infty}\ell_n/\ell_{n+1}=1$, then $\mathcal{G}^\mathcal{P}_c$ has a total variation cutoff.
\end{itemize}
In particular, if $\ell_n=\exp\{-n^\gamma\}$ with $\gamma>0$, then $\mathcal{G}^\mathcal{P}_c$ has a total variation cutoff if and only if $0<\gamma<1$. When $E_n$ contains $id$ for all $n\ge 1$, all above also holds for $\mathcal{G}^\mathcal{P}$.
\end{cor}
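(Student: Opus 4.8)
The plan is to deduce Corollary \ref{c-prodmoderate} as a mechanical combination of Theorem \ref{t-prodmoderate} and Lemma \ref{l-unln}, so the only real work is bookkeeping about which case of Theorem \ref{t-prodmoderate} applies and translating the hypotheses. First I would observe that $\rho_n\ra\infty$ in particular gives $\rho_n\ge 4$ for $n$ large, and $p_n\asymp\rho_n^2\ell_n$ is exactly the hypothesis $p_n/\rho_n^2\asymp\ell_n$ of Theorem \ref{t-prodmoderate}, so that theorem applies verbatim once we split into the two monotonicity cases according to whether $(\ell_n)$ is nondecreasing or nonincreasing. (Note a monotone sequence is one or the other, possibly both if eventually constant; if it is eventually constant then $\log n/\ell_n\to\infty$, so case (1)'s hypothesis $\sup_n \log n/\ell_n<\infty$ forces $\ell_n$ to be bounded away from $0$ in the nondecreasing subcase, and in the nonincreasing subcase eventually constant means $\ell_n/\ell_{n+1}\to 1$, placing us in Corollary part (2) — one should check these degenerate overlaps cause no contradiction, and they do not.)

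Next I would run the two cases. Suppose $(\ell_n)$ is nondecreasing. Then Theorem \ref{t-prodmoderate}(1) says $\mathcal{G}^\mathcal{P}_c$ has a total variation cutoff iff $u_n\ra\infty$, and Lemma \ref{l-unln}(1) says $u_n\ra\infty$ iff $\sup_{n\ge 1}\log n/\ell_n=\infty$. Hence under the hypothesis of Corollary part (1), $\sup_n\log n/\ell_n<\infty$ gives no cutoff; under the hypothesis of part (2), $\sup_n\log n/\ell_n=\infty$ gives a cutoff. (The condition $\liminf \ell_n/\ell_{n+1}>1$ in part (1) is vacuous for a nondecreasing sequence since then $\ell_n/\ell_{n+1}\le 1$, so in the nondecreasing case only the first disjunct can occur — this is why the statement is phrased with ``or''.) Now suppose $(\ell_n)$ is nonincreasing. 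Then Theorem \ref{t-prodmoderate}(2) says there is a cutoff iff $u_n\ell_n\ra\infty$, and Lemma \ref{l-unln}(2) gives: $\ell_n/\ell_{n+1}\ra 1$ implies $u_n\ell_n\ra\infty$ (cutoff), while $\liminf\ell_n/\ell_{n+1}>1$ implies $u_n\ell_n=O(1)$, hence not $\to\infty$, hence no cutoff. For a nonincreasing sequence the first disjunct of part (1), $\sup_n\log n/\ell_n<\infty$, cannot hold unless $\ell_n$ is bounded below, which combined with nonincreasing forces eventual constancy, hence $\ell_n/\ell_{n+1}\to 1$; so again the relevant hypothesis in the nonincreasing case is the second disjunct $\liminf\ell_n/\ell_{n+1}>1$. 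In every subcase the conclusion of Corollary \ref{c-prodmoderate} parts (1) and (2) matches.

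For the concrete instance $\ell_n=\exp\{-n^\gamma\}$ with $\gamma>0$: this is strictly decreasing, so we are in case (2). Compute $\ell_n/\ell_{n+1}=\exp\{(n+1)^\gamma-n^\gamma\}$. When $0<\gamma\le 1$ one has $(n+1)^\gamma-n^\gamma\ra 0$ (indeed $\le\gamma n^{\gamma-1}\ra 0$ by the mean value theorem when $\gamma<1$, and $=1$... wait, for $\gamma=1$ the difference is exactly $1$, so $\ell_n/\ell_{n+1}=e\not\to1$; here instead I would check directly that $u_n\ell_n\to\infty$ via $u_n\ell_n\ge \log(m+1)$ for each fixed $m$ as in the proof of Lemma \ref{l-unln}(2), since $\ell_{n-m+1}/\ell_n = \exp\{n-(n-m+1)\}=e^{m-1}$ is bounded — hmm, that gives $u_n\ell_n\ge \log(m+1)/e^{m-1}$ which is not growing, so the $\gamma=1$ endpoint needs the finer optimization $u_n\ell_n=\max_i \log(i+1)e^{n-(n-i+1)\cdot 1}$-type estimate). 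Actually for $\gamma=1$, $\ell_{n-i+1}/\ell_n=e^{i-1}$, so $u_n\ell_n=\max_{1\le i\le n}\log(i+1)/e^{i-1}=O(1)$, giving no cutoff, consistent with ``$0<\gamma<1$'' being the cutoff range. For $0<\gamma<1$, $\sup_n\log n/\ell_n=\sup_n \log n\cdot e^{n^\gamma}=\infty$ and $\ell_n/\ell_{n+1}\to 1$, so by part (2) of the Corollary there is a cutoff; for $\gamma>1$, $(n+1)^\gamma-n^\gamma\to\infty$ so $\liminf\ell_n/\ell_{n+1}=\infty>1$, giving no cutoff by part (1). The main obstacle, such as it is, is purely organizational: making sure the ``or'' and the two monotonicity branches in Theorem \ref{t-prodmoderate} are matched up correctly and the boundary case $\gamma=1$ is handled by the right lemma part; there is no analytic difficulty beyond the elementary asymptotics of $n^\gamma$. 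Finally the last sentence, transferring everything to the discrete-time family $\mathcal{G}^\mathcal{P}$ when $id\in E_n$, is immediate from the corresponding clause of Theorem \ref{t-prodmoderate}.
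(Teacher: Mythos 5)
Your proposal is correct and takes essentially the same route as the paper, which skips the proof precisely because Corollary \ref{c-prodmoderate} is the ``obvious'' combination of Theorem \ref{t-prodmoderate} and Lemma \ref{l-unln} that you carry out, and your verification of the example $\ell_n=\exp\{-n^\gamma\}$ (including the endpoint $\gamma=1$ via $\liminf\ell_n/\ell_{n+1}=e>1$, or directly via $u_n\ell_n=O(1)$) is also right. One harmless slip: a nonincreasing sequence bounded below need not be eventually constant (e.g.\ $1+1/n$), but this does not matter, since a nonincreasing positive sequence satisfies $\ell_n\le\ell_1$, hence $\log n/\ell_n\ge\log n/\ell_1\ra\infty$, so the first disjunct of part (1) is outright vacuous in that branch and only the condition $\liminf_{n\ra\infty}\ell_n/\ell_{n+1}>1$ is relevant there, exactly as you conclude.
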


\subsection{Examples}\label{ss-ex}

In this subsection, we consider the randomized product in Proposition \ref{t-rp} for illustration of the results developed in Subsections \ref{ss-tmt}-\ref{ss-tmr}. Recall that $\mathcal{G}=(\mathbb{Z}_{n+2},Q_n,U_n)_{n=1}^\infty$, where $Q_n(0)=1/2$ and $Q_n(1)=Q_n(-1)=1/4$. It has been stated in the introduction that the diameter $\rho_n$ of $G_n$ w.r.t. $\{0,\pm 1\}$ is $\lfloor n/2+1\rfloor$ and $\mathbb{Z}_{n+2}$ has $(1,1)$-moderate growth w.r.t. $\{0,\pm 1\}$. As the randomness refers to the case that $\mathcal{P}=(p_n)_{n=1}^\infty$ is a sequence of positive random variables, we treat the specified cases separately in the following.

\begin{ex}[Polynomial random sequences]
Let $X_1,X_2,...$ be i.i.d. positive random variables, $\gamma>0$ and set $p_n=(X_1+\cdots+X_n)^\gamma$. Assume that the expectation $\mu$ of $X_1$ is finite. By the strong law of large numbers, one has
\[
 \frac{X_1+\cdots+X_n}{n}\sim\mu,\quad \text{almost surely},
\]
which implies
\[
 \frac{p_n}{\rho_n^2}\sim \ell_n:=\mu^\gamma n^{\gamma-2},\quad\text{almost surely}.
\]
Clearly, $(\ell_n)_{n=1}^\infty$ is monotonic, $\ell_n/\ell_{n+1}\ra 1$ and
\[
 \sup_{n\ge 1}\frac{\log n}{\ell_n}\begin{cases}<\infty&\text{for }\gamma>2,\\
 =\infty,&\text{for }0<\gamma\le 2.\end{cases}
\]
As a consequence of Corollary \ref{c-prodmoderate}, if $0<\gamma\le 2$, then $\mathcal{G}^\mathcal{P}$ has a total variation cutoff almost surely; if $\gamma>2$, then $\mathcal{G}^\mathcal{P}$ has no total variation cutoff with probability $1$.

\end{ex}

\begin{ex}[Exponential random sequences]
Let $Y_1,Y_2,...$ be i.i.d. positive random variables and $p_n=Y_1\times\cdots\times Y_n$. For $n\ge 1$, let $(\ell_{n,i})_{i=1}^n$ be a non-decreasing arrangement of $(p_i/\rho_i^2)_{i=1}^n$ and set
\[
 t_n=\sup_{1\le i\le n}\frac{\log(i+1)}{\ell_{n,i}}.
\]
Using a similar reasoning as in the proof of Theorem \ref{t-prodmoderate}, one may conclude using Theorem \ref{p-prodmoderate} that $\mathcal{G}^\mathcal{P}$ has a total variation cutoff if and only if $t_n\ell_{n,1}\ra\infty$.

To analyze the product $t_n\ell_{n,1}$, we assume that the expectation $\nu$ of $\log Y_1$ is finite. By the strong law of large numbers, there is an event $E$ with probability $1$ such that
\[
 \nu_n:=\frac{\log(p_n/\rho_n)}{n}\ra \nu\quad\text{on }E.
\]
In the following, we focus on the case $\nu>0$. By writing $p_n/\rho_n^2=e^{\nu_nn}$, one may select, for each $\omega\in E$, a constant $C(\omega)\in(0,1)$ such that $p_n(\omega)/\rho_n^2\ge C(\omega)e^{C(\omega)\nu n}$ for all $n\ge 1$. This implies that, on the event $E$,
\[
 \ell_{n,1}\asymp 1,\quad \ell_{n,i}\ge Ce^{C\nu i},\quad\forall 1\le i\le n,\,n\ge 1.
\]
Consequently, we obtain $t_n\ell_{n,1}=O(1)$ on $E$, which is equivalent to say that $\mathcal{G}^\mathcal{P}$ has no total variation cutoff with probability $1$.

The results in the above discussion are summarized in Proposition \ref{t-rp}.

\end{ex}

\section{Constructions of theoretical frameworks}\label{s-ctf}
This section is dedicated to proving Theorem \ref{p-prodmoderate}. In the first subsection, we review those required but developed results in the introduction. In the second subsection, we treat the discrete time and continuous time cases separately and provide proofs in detail.

\subsection{Review of technical supports}\label{ss-rots}

In this subsection, we survey those equivalences in Table \ref{tab-scheme} and state them by following the setting in the introduction. The first two propositions are supportive to the first two equivalences in Table \ref{tab-scheme} and, in fact, hold under more general assumptions.

\begin{prop}[Theorems 3.1 and 3.3 in \cite{CSal13-1}]\label{p-dtctcomp}
Let $\mathcal{F}=(G_n,Q_n,U_n)_{n=1}^\infty$ be a family of random walks on finite groups and $\delta=\inf_n Q_n(id)$. Assume that $\delta>0$ and, for some $\epsilon_0\in(0,1)$, $T_{n,\textnormal{\tiny TV}}(\epsilon_0)\ra\infty$ or $T_{n,\textnormal{\tiny TV}}^{(c)}(\epsilon_0)\ra\infty$. Then, in the total variation, $\mathcal{F}$ has a cutoff (resp. pre-cutoff) if and only if $\mathcal{F}_c$ has a cutoff (resp. pre-cutoff). Furthermore, if $\mathcal{F}$ or $\mathcal{F}_c$ presents a total variation cutoff, then $T_{n,\textnormal{\tiny TV}}(\epsilon)\sim T_{n,\textnormal{\tiny TV}}^{(c)}(\epsilon)$ for all $\epsilon\in(0,1)$ and, for sequences of positive reals, $(t_n)_{n=1}^\infty$ and $(b_n)_{n=1}^\infty$, satisfying $b_n=o(t_n)$,
\[
 \left|T_{n,\textnormal{\tiny TV}}(\epsilon)-t_n\right|=O(b_n),\,\forall \epsilon\in(0,1)\quad\Lra\quad \left|T_{n,\textnormal{\tiny TV}}^{(c)}(\epsilon)-t_n\right|=O(b_n),\,\forall \epsilon\in(0,1).
\]
\end{prop}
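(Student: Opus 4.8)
The plan is to derive the whole statement — the (pre-)cutoff equivalences, $T_{n,\textnormal{\tiny TV}}(\epsilon)\sim T_{n,\textnormal{\tiny TV}}^{(c)}(\epsilon)$, and the window transfer — from a pair of comparison inequalities between the discrete-time total variation $d_{n,\textnormal{\tiny TV}}$ and the continuous-time one $d_{n,\textnormal{\tiny TV}}^{(c)}$, evaluated at time scales that differ only by an additive $O(\sqrt t\,)$ term (from Poissonization) and a constant multiplicative factor depending on $\delta$ (from a spectral comparison). Throughout, both functions are non-increasing, which is used freely. The additive $\sqrt t$- and constant-factor errors are harmless for cutoff statements, and, via a submultiplicativity argument, also for the window statement.

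The first, easy, inequality is Poissonization: writing $H_{n,t}=e^{t(K_n-I)}=e^{-t}\sum_{m\ge0}\frac{t^m}{m!}K_n^m$ and using convexity of the total variation,
$$ d_{n,\textnormal{\tiny TV}}^{(c)}(t)\ \le\ \mathbb E\!\left[d_{n,\textnormal{\tiny TV}}(N_t)\right],\qquad N_t\sim\mathrm{Pois}(t). $$
Since $d_{n,\textnormal{\tiny TV}}$ is non-increasing and bounded by $1$, splitting the expectation at $t-\theta\sqrt t$ and using the uniform lower-tail bound $\mathbb P(N_t\le t-\theta\sqrt t\,)\le e^{-\theta^2/2}$ gives, for every $\theta>0$ and $t$ large,
$$ d_{n,\textnormal{\tiny TV}}^{(c)}(t)\ \le\ d_{n,\textnormal{\tiny TV}}\big(\lceil t-\theta\sqrt t\,\rceil\big)+e^{-\theta^2/2}; $$
applying this with $t$ replaced by $ct$ ($c>1$) also yields $d_{n,\textnormal{\tiny TV}}(m)\ge d_{n,\textnormal{\tiny TV}}^{(c)}(cm)-e^{-c'(c)\,m}$. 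No laziness is needed here, and these already transfer the ``upper'' side of any discrete (pre-)cutoff to the continuous chain and the ``lower'' side of any continuous (pre-)cutoff to the discrete chain.

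The second, harder, inequality — bounding $d_{n,\textnormal{\tiny TV}}(m)$ above by $d_{n,\textnormal{\tiny TV}}^{(c)}$ at a comparable time — is where $\delta=\inf_n Q_n(id)>0$ is indispensable (it is false for periodic chains). By reversibility (the $Q_{n,i}$ of interest are symmetric; the general case reduces to this in \cite{CSal13-1}) one has $d_{n,2}(m)^2=\sum_{\lambda\ne1}\lambda^{2m}$ for the $\ell^2$-distance, and, $K_n$ being lazy, every such eigenvalue lies in $[2\delta-1,1)$, on which $\frac{-\log|\lambda|}{1-\lambda}\ge\kappa(\delta)>0$; hence $\lambda^{2m}\le e^{2\kappa(\delta)m(\lambda-1)}$ and therefore $d_{n,2}(m)\le d_{n,2}^{(c)}\!\big(\kappa(\delta)m\big)$. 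Combining this with $d_{n,\textnormal{\tiny TV}}(m)\le\tfrac12 d_{n,2}(m)$ reduces the claim to controlling the continuous-time $\ell^2$-distance by the continuous-time total variation at a slightly earlier time — the genuinely technical point, carried out in \cite{CSal13-1}; for the moderate-growth chains that concern us the total-variation and $\ell^2$ mixing times coincide up to constants, so this closes the loop without loss. This is the step I expect to be the main obstacle.

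Finally, assembling: from the two comparisons and monotonicity one reads off that $d_{n,\textnormal{\tiny TV}}(m)\to0$ along a scale $m\asymp t_n$ iff $d_{n,\textnormal{\tiny TV}}^{(c)}(t)\to0$ along the same scale, and that each is bounded away from $0$ iff the other is — which gives the pre-cutoff equivalence and $T_{n,\textnormal{\tiny TV}}(\epsilon)\asymp T_{n,\textnormal{\tiny TV}}^{(c)}(\epsilon)$ at once. To upgrade $\asymp$ to $\sim$, to get the genuine ``$d\to1$'' needed for a full cutoff, and to transfer an $O(b_n)$-window, I would use submultiplicativity of the total variation for a walk on a group, $d_{n,\textnormal{\tiny TV}}(m+m')\le d_{n,\textnormal{\tiny TV}}(m)\,d_{n,\textnormal{\tiny TV}}(m')$, from $Q^{(m+m')}-U=(Q^{(m)}-U)*(Q^{(m')}-U)$: it forces the distance to decay geometrically once it drops below $1/4$ (absorbing the $\sqrt{t_n}$- and constant-factor errors on the ``$d\to0$'' side and letting the ``$\to1$'' behaviour be inherited on the other side), and it forces the width of any cutoff window to be $\gtrsim\sqrt{t_n}$, so that the hypothesis $|T_{n,\textnormal{\tiny TV}}(\epsilon)-t_n|=O(b_n)$ is vacuous unless $b_n\gtrsim\sqrt{t_n}$ — exactly what makes the additive $\sqrt{t_n}$-shift compatible with an $O(b_n)$ conclusion; the reverse implication is symmetric.
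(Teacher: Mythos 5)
Your Poissonization step is fine, and it correctly gives one half of the comparison (continuous distance bounded by discrete distance at time $t-\theta\sqrt t$, hence the ``lower'' transfer). The genuine gap is in the other half, which is exactly the content of the cited result: you need $d_{n,\textnormal{\tiny TV}}(m)$ bounded by $d_{n,\textnormal{\tiny TV}}^{(c)}$ at a time $\sim m$ (ratio tending to $1$, not a constant $\kappa(\delta)$), and your proposed route through the $\ell^2$-distance cannot deliver this. First, there is no general bound of the $\ell^2$-distance by the total variation at comparable times, and your fallback --- ``for the moderate-growth chains that concern us the TV and $\ell^2$ mixing times coincide up to constants'' --- is both outside the hypotheses of the proposition (which is stated for arbitrary families of lazy walks on finite groups) and unavailable where the proposition is actually applied in this paper: it is invoked for the product chains $\mathcal{F}^\mathcal{P}$, whose moderate-growth parameters $(A,d)$ degenerate as the number of factors grows, as the introduction explicitly points out. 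Second, even granting an $\ell^2$--TV comparison up to constants, your chain of inequalities only yields $T_{n,\textnormal{\tiny TV}}(\epsilon)\asymp T_{n,\textnormal{\tiny TV}}^{(c)}(\epsilon)$, whereas the proposition asserts $T_{n,\textnormal{\tiny TV}}(\epsilon)\sim T_{n,\textnormal{\tiny TV}}^{(c)}(\epsilon)$ and an $O(b_n)$ window transfer; a multiplicative loss $\kappa(\delta)\ne 1$ destroys both. So the core of the proof is precisely the step you defer to \cite{CSal13-1}.

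For the record, the paper itself does not prove this proposition either (it is quoted from \cite{CSal13-1}; the ingredient it does restate is Lemma \ref{l-dtctcomp}), and the argument there is different from your sketch: laziness is used to write $K=\delta I+(1-\delta)\bar K$, so that $K^m$ is a binomial mixture of powers of $\bar K$ while $H_t=e^{(1-\delta)t(\bar K-I)}$ is a Poisson mixture of the same powers; comparing the Binomial$(m,1-\delta)$ and Poisson$((1-\delta)t)$ laws with matched means gives two-sided bounds at asymptotically equal times with additive $O(\sqrt m)$ fuzz, which is what produces the ratio-$1$ statement and the window transfer. Two smaller points in your last paragraph: the group-walk submultiplicativity needs a factor (Young's inequality gives $d(m+m')\le 2\,d(m)\,d(m')$, or use the standard $\bar d$), and submultiplicativity does not force a cutoff window of size $\sqrt{t_n}$ --- non-lazy chains satisfy it and can have $O(1)$ windows. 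The correct source of the $\sqrt{t_n}$ lower bound on the window (which is indeed what makes the additive $\sqrt{t_n}$ errors harmless) is again the laziness, via the same binomial smoothing: $\|K^m(x,\cdot)-K^{m'}(x,\cdot)\|_{\textnormal{\tiny TV}}\le C|m-m'|/\sqrt m$, so the distance cannot drop over a scale shorter than $\sqrt{t_n}$.
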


\begin{prop}\label{p-tvhdcomp}
Let $\mathcal{F}=(G_n,Q_n,U_n)_{n=1}^\infty$ be a family of random walks on finite groups and let $T_{n,\textnormal{\tiny TV}}^{(c)}$ and $T_{n,H}^{(c)}$ be the total variation and the Hellinger distance of the $n$th chain in $\mathcal{F}_c$. Then, $\mathcal{F}_c$ has a total variation cutoff (resp. pre-cutoff) if and only if $\mathcal{F}_c$ has a Hellinger distance cutoff (resp. pre-cutoff). Furthermore, if $\mathcal{F}_c$ presents a cutoff in either measurement, then $T_{n,\textnormal{\tiny TV}}^{(c)}(\epsilon)\sim T_{n,H}^{(c)}(\epsilon)$ for all $\epsilon\in(0,1)$.
\end{prop}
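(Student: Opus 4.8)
The plan is to derive everything from the two–sided comparison (\ref{eq-tvhdcomp}), which, as the text notes, also holds in continuous time. Write $d_n(t):=d_{n,\textnormal{\tiny TV}}^{(c)}(t)$ and $h_n(t):=d_{n,H}^{(c)}(t)$. The right-hand inequality of (\ref{eq-tvhdcomp}) gives $h_n(t)^2\le d_n(t)$ directly. For the reverse direction, since $h_n(t)\le 1$ the left-hand inequality of (\ref{eq-tvhdcomp}) rearranges to $\sqrt{1-d_n(t)^2}\ge 1-h_n(t)^2\ge 0$, and squaring yields $d_n(t)^2\le h_n(t)^2\bigl(2-h_n(t)^2\bigr)\le 2\,h_n(t)^2$. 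Hence, for all $n$ and all $t\ge 0$,
\[
 h_n(t)^2\le d_n(t)\le\sqrt 2\,h_n(t),\qquad 1-\sqrt{1-d_n(t)^2}\le h_n(t)^2.
\]
From these pointwise bounds one reads off that, along any sequence $s_n>0$: $d_n(s_n)\to 0\Leftrightarrow h_n(s_n)\to 0$ (use $h_n^2\le d_n$ and $d_n\le\sqrt2\,h_n$); $d_n(s_n)\to 1\Leftrightarrow h_n(s_n)\to 1$ (use $1-\sqrt{1-d_n^2}\le h_n^2$ one way and $h_n^2\le d_n$ the other); and $\liminf_n d_n(s_n)>0\Leftrightarrow\liminf_n h_n(s_n)>0$ (again from $h_n^2\le d_n\le\sqrt2\,h_n$).

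These three equivalences are precisely what the definitions of cutoff and pre-cutoff for $\mathcal{F}_c$ refer to. If $\mathcal{F}_c$ has a total variation cutoff with cutoff time $t_n$, then by the first two equivalences the \emph{same} sequence $t_n$ verifies the continuous-time form of (\ref{eq-defcutoff2}) for $h_n$, i.e.\ is a Hellinger cutoff time, and symmetrically. Likewise, a triple $(B,A,(t_n))$ realizing the continuous-time form of the pre-cutoff (\ref{eq-precutoff}) for $d_n$ realizes it for $h_n$ and conversely: the first condition of (\ref{eq-precutoff}) transfers through the ``$\to 0$'' equivalence and the second through the ``$\liminf>0$'' equivalence. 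This establishes the claimed equivalences of cutoff and of pre-cutoff, with a common cutoff time (resp.\ common pre-cutoff parameters).

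For the ``furthermore'' part, suppose $\mathcal{F}_c$ presents a cutoff in one, hence in both, measurements. By the equivalence of (\ref{eq-defcutoff1}) and (\ref{eq-defcutoff2}) (\cite[Proposition 2.4]{CSal08}), under a cutoff the mixing time $T_{n,\textnormal{\tiny TV}}^{(c)}(\epsilon)$ is itself a total variation cutoff time for every $\epsilon\in(0,1)$, and $T_{n,H}^{(c)}(\epsilon)$ a Hellinger cutoff time. By the pointwise bounds a total variation cutoff time is also a Hellinger cutoff time and vice versa, and since $t\mapsto d_n(t)$ and $t\mapsto h_n(t)$ are nonincreasing, any two cutoff times of $\mathcal{F}_c$ are asymptotically equivalent (otherwise, passing to a subsequence, one could find $a>1$ along which $d_n$ or $h_n$ is forced to tend simultaneously to $0$ and to $1$). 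Hence $T_{n,\textnormal{\tiny TV}}^{(c)}(\epsilon)\sim T_{n,H}^{(c)}(\epsilon)$ for all $\epsilon\in(0,1)$.

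This is in substance \cite[Theorem 1.1]{CK16} specialized to the present setting, so I do not anticipate a genuine obstacle; the care needed is purely in the bookkeeping: preserving the $\liminf$ in the pre-cutoff definition in both directions — which is exactly why the full inequality $d_n\le\sqrt2\,h_n$, and not merely the implication $h_n\to 0\Rightarrow d_n\to 0$, is required — and the standard fact that any two cutoff times of a family agree asymptotically, which rests on the monotonicity of the distances in $t$.
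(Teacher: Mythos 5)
Your proposal is correct and rests on exactly the same ingredient as the paper's proof: the two-sided comparison (\ref{eq-tvhdcomp}), which the paper uses directly for the pre-cutoff equivalence and invokes via \cite[Proposition 1.1]{CK16} for the cutoff equivalence and the asymptotics $T_{n,\textnormal{\tiny TV}}^{(c)}(\epsilon)\sim T_{n,H}^{(c)}(\epsilon)$. You simply carry out the transfer argument explicitly (including the standard fact that any two cutoff times are asymptotically equivalent) instead of citing it, so there is no substantive difference in approach.
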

\begin{proof}
The equivalence of cutoffs is already discussed in Proposition 1.1 of \cite{CK16}, while the equivalence of pre-cutoffs is given by (\ref{eq-tvhdcomp}).
\end{proof}

To analyze products of random walks, we need a variant of Proposition \ref{p-moderate} in the Hellinger distance and, particularly, in the continuous time case.

\begin{prop}\label{p-moderatehd}
Let $(G,Q,U)$ be a symmetric random walk on a finite group and $d_{\textnormal{\tiny TV}}^{(c)},d_H^{(c)}$ be the total variation and the Hellinger distance of its associated continuous time random walk. If $G$ has $(A,d)$-moderate growth with respect to the support $E$ of $Q$, then there is $C>0$ depending only on $A,d$ such that
\[
 \frac{1}{2}e^{-Ct/\rho^2}\le d_{\textnormal{\tiny TV}}^{(c)}(t)\le Ce^{-\eta t/(2\rho^2)},\quad\forall t\ge 0,
\]
and
\[
 \frac{1}{4}e^{-Ct/\rho^2}\le d_{H}^{(c)}(t)\le Ce^{-\eta t/(4\rho^2)},\quad\forall t\ge 0,
\]
where $\rho$ is the diameter of $(G,E)$, $\eta=\min\{Q(x)|x\in E\}$ and both lower bounds require $\rho\ge 4$ in addition.
\end{prop}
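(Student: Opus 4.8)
The plan is to deduce the two Hellinger estimates from the two total variation estimates by means of (\ref{eq-tvhdcomp}), and to prove the latter directly. Granting the total variation bounds, the relation $d_H^{(c)}(t)^2\le d_{\textnormal{\tiny TV}}^{(c)}(t)$ gives $d_H^{(c)}(t)\le C^{1/2}e^{-\eta t/(4\rho^2)}$, while $1-\sqrt{1-d_{\textnormal{\tiny TV}}^{(c)}(t)^2}\le d_H^{(c)}(t)^2$ combined with the elementary inequality $1-\sqrt{1-s}\ge s/2$ on $[0,1]$ gives $d_H^{(c)}(t)\ge 2^{-1/2}d_{\textnormal{\tiny TV}}^{(c)}(t)\ge\tfrac14 e^{-Ct/\rho^2}$. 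So, after enlarging $C$, it suffices to prove
\[
 \tfrac12 e^{-Ct/\rho^2}\le d_{\textnormal{\tiny TV}}^{(c)}(t)\le Ce^{-\eta t/(2\rho^2)},\qquad t\ge0,
\]
with the lower bound under the additional hypothesis $\rho\ge4$.

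Both parts are treated after a routine lazification. Replacing $Q$ by $Q'=\tfrac12(\delta_{id}+Q)$ only reparametrizes the continuous-time walk by a factor $2$ (since $e^{t(K-I)}=e^{2t(K'-I)}$ for $K'=\tfrac12(I+K)$), leaves the Cayley-graph geometry unchanged, preserves $(A,d)$-moderate growth with constants depending only on $A,d$, and replaces $\eta$ by $\eta/2$ — and in the upper bound below the extra factor $2$ in time compensates the halving of $\eta$, while the lower bound does not involve $\eta$ at all; so we may and do assume $id\in E$, and then $E^s$ is the ball of radius $s$ in $(G,E)$ and $\rho$ its diameter. For the upper bound I would reuse (\ref{eq-moderateub}): the Poisson representation $H_t=e^{-t}\sum_{m\ge0}\tfrac{t^m}{m!}K^m$ and convexity give $d_{\textnormal{\tiny TV}}^{(c)}(t)\le e^{-t}\sum_{m\ge0}\tfrac{t^m}{m!}d_{\textnormal{\tiny TV}}(m)$, and inserting (\ref{eq-moderateub}) and summing,
\[
 d_{\textnormal{\tiny TV}}^{(c)}(t)\le C_1 e^{-t}\sum_{m\ge0}\frac{t^m}{m!}e^{-\eta m/\rho^2}=C_1\exp\!\big(t(e^{-\eta/\rho^2}-1)\big)\le C_1 e^{-\eta t/(2\rho^2)},
\]
the last step using $e^{-s}-1\le-s/2$ for $0\le s\le1$ (applicable as $\eta/\rho^2\le1$).

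For the lower bound I would combine a second-eigenfunction estimate with an upper bound on the spectral gap. Let $\beta_1<1$ be the second-largest eigenvalue of $K$ and $\lambda=1-\beta_1$. Since $K$ comes from a symmetric probability on the group $G$, Fourier inversion on $G$ realizes $\beta_1$ as an eigenvalue of the Hermitian Fourier coefficient $\widehat Q(\pi)=\sum_{x}Q(x)\pi(x)$ at some nontrivial irreducible unitary representation $\pi$; choosing a unit eigenvector $v$ and putting $\phi(x)=\langle\pi(x)v,v\rangle$ yields an eigenfunction of $K$ with eigenvalue $\beta_1$ satisfying $\phi(id)=1=\|\phi\|_\infty$ and $\sum_xU(x)\phi(x)=0$ (because $\tfrac1{|G|}\sum_x\pi(x)=0$). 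Hence $H_t\phi=e^{-\lambda t}\phi$, and the standard test-function bound $d_{\textnormal{\tiny TV}}^{(c)}(t)\ge\tfrac12\big|\sum_y(H_t(id,y)-U(y))\phi(y)\big|$ gives $d_{\textnormal{\tiny TV}}^{(c)}(t)\ge\tfrac12 e^{-\lambda t}$ for all $t\ge0$. It remains to show $\lambda\le C(A,d)/\rho^2$, and for this I would feed the truncated-distance function $f(x)=\big(1-2\,\mathrm{dist}(x)/\rho\big)_+$ (where $\mathrm{dist}(x)$ is the $(G,E)$-distance from $id$ to $x$) into the Rayleigh quotient of the Dirichlet form $\mathcal{E}(f,f)=\tfrac12\sum_{x,y}(f(x)-f(y))^2K(x,y)U(x)$: along each edge $f$ changes by at most $2/\rho$, so $\mathcal{E}(f,f)\le2/\rho^2$; on the other hand $f\ge\tfrac12$ on the ball $E^{\lfloor\rho/4\rfloor}$ and $f=0$ on the ball of radius $\lfloor\rho/4\rfloor$ about any point at distance $\rho$ from $id$, and — using $\rho\ge4$, so that $\lfloor\rho/4\rfloor\ge1$ and $2\lfloor\rho/4\rfloor<\rho$ — these two balls are nonempty and disjoint, each of $U$-measure at least $A^{-1}8^{-d}$ by moderate growth. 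Thus the $U$-variance of $f$ is at least some $c(A,d)>0$, so $\lambda\le\mathcal{E}(f,f)/\mathrm{Var}_U(f)\le C(A,d)/\rho^2$, and therefore $d_{\textnormal{\tiny TV}}^{(c)}(t)\ge\tfrac12 e^{-C(A,d)t/\rho^2}$.

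I expect the spectral-gap upper estimate to be the main obstacle: the required test function must be Lipschitz of order $1/\rho$ along edges yet have $U$-variance bounded below by a constant depending only on $A$ and $d$, and it is exactly the verification of the latter that uses moderate growth together with the sharp threshold $\rho\ge4$ — $\lfloor\rho/4\rfloor\ge1$ makes $E^{\lfloor\rho/4\rfloor}$ a genuine ball and $2\lfloor\rho/4\rfloor<\rho$ makes it disjoint from its translate at a diameter-distance point, which is what allows the argument to run under $\rho\ge4$ rather than the much stronger $\rho\ge A2^{2d+2}$ of Proposition \ref{p-moderate}. A secondary, routine point is the lazification when $id\notin E$: then $\rho$, defined via $|E^m|$, need not be the graph diameter, and one has to check that enlarging $E$ by $id$ changes $\rho$ and the moderate-growth constants only by factors controlled by $A$ and $d$.
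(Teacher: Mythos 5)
Your proposal is correct in outline and follows the same skeleton as the paper: prove the two total variation bounds first and transfer them to the Hellinger distance via (\ref{eq-tvhdcomp}); get the upper bound by lazifying, invoking Proposition \ref{p-moderate} for the lazy walk and Poissonizing (your computation is the paper's verbatim); and get the lower bound from $d_{\textnormal{\tiny TV}}^{(c)}(t)\ge\tfrac12 e^{-\lambda t}$ together with a spectral-gap estimate $\lambda\le C(A,d)/\rho^2$. Where you genuinely diverge is in how these last two ingredients are obtained. For $d_{\textnormal{\tiny TV}}^{(c)}(t)\ge\tfrac12 e^{-\lambda t}$ the paper uses the identity $2d_{\textnormal{\tiny TV}}^{(c)}(t)=\|H_t-\Pi\|_{\infty\ra\infty}$ and reversibility, while you build an explicit eigenfunction $\phi(x)=\langle\pi(x)v,v\rangle$ from a nontrivial irreducible representation with $\|\phi\|_\infty=\phi(id)=1$ and mean zero; both are standard and correct, yours being more hands-on and the paper's more compact. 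For the gap estimate, the paper simply cites \cite[Eq.\ (3.2)]{DS94} (noting that $id\in E$ is not needed there), whereas you reprove it with the truncated-distance test function and moderate growth; your variance/Dirichlet-form computation is sound and correctly isolates why $\rho\ge 4$ suffices, so this part is a self-contained alternative to the citation.

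The one soft spot is exactly the point you downgraded to ``routine'': the lazification in the \emph{lower} bound. The paper deliberately avoids lazifying there, because when $id\notin E$ the quantity $\rho=\min\{m\ge1:|E^m|=|G|\}$ need not equal the graph diameter: it can exceed it (e.g.\ $\mathbb{Z}_5$ with $E=\{\pm1\}$ has $\rho=4$ but diameter $2$), so ``a point at distance $\rho$ from $id$'' may not exist, and after adding $id$ to $E$ the new diameter $\rho'$ can be strictly smaller than $\rho$, so the hypothesis $\rho\ge4$ does not guarantee $\rho'\ge4$ and your two-ball construction cannot be run verbatim on the lazified walk. This is patchable but requires an argument you do not supply: non-bipartiteness (forced by $|E^\rho|=|G|$) gives an odd closed walk of length at most $2\rho'+1$, whence $\rho\le 3\rho'+1$, which both transfers the moderate-growth constants and converts a bound $\lambda\le C/\rho'^2$ into $\lambda\le C'/\rho^2$; the residual case $\rho'\le 3$ (hence $\rho\le 10$) must then be handled trivially via $\lambda\le 2$. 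Alternatively, run your test-function argument directly on the original graph with radius parameter tied to the true diameter, or do as the paper does and use the gap bound without assuming $id\in E$. With one of these fixes spelled out, your proof is complete.
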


\begin{rem}
{\rm (1)} Compared with Proposition \ref{p-moderate}, the generating set $E$ in Proposition \ref{p-moderatehd} need not contain $id$ and this means that the laziness of $(G,Q,U)$ is not required at all. In fact, the laziness of a continuous time walk can be seen from the identity $\frac{1}{2}(K-I)=\frac{1}{2}(K+I)-I$, where $K$ refers to the transition matrix determined by $Q$.\\
{\rm (2)} The prerequisite of the lower bound on the graph diameter (at least $4$) is due to the development of an upper bound on the spectral gap. See the proof of (3.2) in \cite{DS94} for details.
\end{rem}

\begin{proof}[Proof of Proposition \ref{p-moderatehd}]
First, we set $Q'=(Q+\mathbf{1}_{\{id\}})/2$ and $E'=E\cup\{id\}$. Let $K,K'$ be the transition matrices determined by $Q,Q'$, set $\rho,\rho'$ for the diameters of $(G,E),(G,E')$ and define $\eta=\min\{Q(x)|x\in E\}$ and $\eta'=\min\{Q'(x)|x\in E'\}$. Obviously, one has $K'=(K+I)/2$ and $H_t=H'_{2t}$, where $H_t=e^{-t(I-K)}$ and $H'_t=e^{-t(I-K')}$. Let $d'_{\textnormal{\tiny TV}},d_{\textnormal{\tiny TV}}^{(c)}$ be the total variations of $(G,Q',U),(G,H_t,U)$. By applying Proposition \ref{p-moderate} to $(G,Q',U)$, since $G$ has $(A,d)$-moderate growth with respect to $E$ (and, hence, with respect to $E'$), there is $C_2>0$ depending only on $A,d$ such that
\[
 d'_{\textnormal{\tiny TV}}(m)\le C_2e^{-\eta' m/(\rho')^2},\quad\forall m\ge 0.
\]
By the triangle inequality, this implies
\[
 d_{\textnormal{\tiny TV}}^{(c)}(t)\le e^{-2t}\sum_{m=0}^\infty\frac{(2t)^m}{m!}d'_{\textnormal{\tiny TV}}(m)
 \le C_2\exp\left\{2t\left(e^{-\eta'/(\rho')^2}-1\right)\right\}
 \le C_2e^{-\eta t/(2\rho^2)},
\]
where the last inequality comes from $\rho'\le \rho$, $\eta'\ge \eta/2$ and the fact that $e^{-u}\le 1-u/2$ for $u\in[0,1]$. To see a lower bound of the total variation, let $\lambda$ be the smallest nonzero eigenvalue of $I-K$. Note that $2d_{\textnormal{\tiny TV}}^{(c)}(t)=\|H_t-\Pi\|_{\infty\ra\infty}$, where $\Pi f:=\pi(f)\mathbf{1}$, $\|L\|_{\infty\ra\infty}:=\sup\{\|Lf\|_\infty:\|f\|_\infty\le 1\}$ and $\|f\|_\infty:=\max_x|f(x)|$. By the symmetry of $Q$, this implies
\[
 d_{\textnormal{\tiny TV}}^{(c)}(t)\ge \frac{1}{2}e^{-\lambda t},\quad\forall t\ge 0.
\]
Based on the $(A,d)$-moderate growth of $(G,E)$, Diaconis and Saloff-Coste showed in \cite[Equation (3.2)]{DS94} that if $\rho\ge 4$, then there is a constant $C_1>0$ depending only on $A,d$ such that $\lambda\le C_1/\rho^2$, where the assumption of $id\in E$ is in fact not required. This proves the desired bounds for the total variation with $C=\max\{C_1,C_2\}$, while the combination of (\ref{eq-tvhdcomp}) with such total variation bounds leads to bounds for the Hellinger distance.
\end{proof}

Finally, we introduce the fourth equivalence in Table \ref{tab-scheme}. Let $\mathcal{A}=\{a_{n,i}|1\le i\le k_n,n\ge 1\}$ and $\Lambda=\{\lambda_{n,i}|1\le i\le k_n,n\ge 1\}$ be triangular arrays of positive reals and set
\begin{equation}\label{eq-lt}
 \mathcal{F}(\mathcal{A},\Lambda)=(f_n)_{n=1}^\infty, \quad f_n(t)=\sum_{i=1}^{k_n}a_{n,i}e^{-\lambda_{n,i}t}.
\end{equation}
As $f_n$ is nonnegative and decreasing, we define the mixing time of $f_n$ by $T_n(\epsilon)=\min\{t\ge 0|f_n(t)\le\epsilon\}$ for $\epsilon>0$ and define the cutoff for $\mathcal{F}(\mathcal{A},\Lambda)$ as follows.

\begin{defn}\label{d-ltcutoff}
The family $\mathcal{F}(\mathcal{A},\Lambda)$ is said to present a cutoff if there is a sequence $(t_n)_{n=1}^\infty$ of positive reals such that
\[
 \lim_{n\ra\infty}f_n(at_n)=\begin{cases}0&\text{if }a>1,\\\infty&\text{if }0<a<1.\end{cases}
\]
In the above, $(t_n)_{n=1}^\infty$ or briefly $t_n$ is called a cutoff time.
\end{defn}

\begin{rem}
It is easy to check from Definition \ref{d-ltcutoff} that $\mathcal{F}(\mathcal{A},\Lambda)$ has a cutoff if and only if
$T_n(\epsilon)\sim T_n(\delta)$ for all $\epsilon>0$ and $\delta>0$. In particular,
if $\mathcal{F}(\mathcal{A},\Lambda)$ has a cutoff, then $T_n(\epsilon)$ is a cutoff time for all $\epsilon>0$.
\end{rem}

By expressing $f_n$ as a Laplace transform of some positive measure, the authors of \cite{CSal10} provided a criterion (Theorems 3.5 and 3.8 in \cite{CSal10}) to determine the cutoff for $\mathcal{F}(\mathcal{A},\Lambda)$. Later, such a method was refined in \cite[Theorem 2.4]{CHS16}. To see the details, we set, for $c>0$,
\begin{equation}\label{eq-lambdatau}
 \lambda_n(c)=\lambda_{n,j_n(c)},\quad \tau_n(c)=\max_{i\ge j_n(c)}\frac{\log(1+a_{n,1}+\cdots+a_{n,i})}{\lambda_{n,i}},
\end{equation}
where $j_n(c):=\min\{i\ge 1|a_{n,1}+\cdots+a_{n,i}>c\}$.

\begin{prop}[Theorem 2.4 in \cite{CHS16}]\label{p-ltcutoff}
Let $\mathcal{F}$ be the family in (\ref{eq-lt}), $T_n(\epsilon)$ be the mixing time of $f_n$ and $\lambda_n,\tau_n$ be the quantities in (\ref{eq-lambdatau}). Then, the following are equivalent.
\begin{itemize}
\item[(1)] $\mathcal{F}(\mathcal{A},\Lambda)$ has a cutoff.

\item[(2)] There is $\epsilon>0$ such that $T_n(\epsilon)\lambda_n(c)\ra\infty$ for all $c>0$.

\item[(3)] $\tau_n(c)\lambda_n(c)\ra\infty$ for all $c>0$.
\end{itemize}
In particular, $\tau_n(c)$ is a cutoff time for all $c>0$.
\end{prop}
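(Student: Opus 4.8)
The plan is to transfer everything from the exponential sums $f_n$ to the associated \emph{suprema of exponentials}
\[
  g_n(t):=\sup_{1\le i\le k_n}S_{n,i}\,e^{-\lambda_{n,i}t},\qquad S_{n,i}:=a_{n,1}+\cdots+a_{n,i},
\]
where I relabel so that $\lambda_{n,1}\le\cdots\le\lambda_{n,k_n}$ (this monotonicity is already tacit in the definitions of $j_n(c)$ and $\tau_n(c)$). From $e^{-\lambda_{n,j}t}\ge e^{-\lambda_{n,i}t}$ for $j\le i$ one gets $f_n(t)\ge S_{n,i}e^{-\lambda_{n,i}t}$ for each $i$, hence $f_n\ge g_n$; for a near-matching upper bound I would write $f_n(t)=t\int_0^\infty F_n(\lambda)e^{-\lambda t}\,d\lambda$ with $F_n(\lambda)=\sum_{i:\lambda_{n,i}\le\lambda}a_{n,i}$ (so $\sup_\lambda F_n(\lambda)e^{-\lambda t}=g_n(t)$), split the integral at $\lambda=1/t$, and on the far piece use $F_n(\lambda)e^{-\lambda t}\le g_n((1-\delta)t)e^{-\lambda\delta t}$, obtaining
\[
  g_n(t)\ \le\ f_n(t)\ \le\ \Bigl(e+\tfrac1\delta\Bigr)\,g_n\bigl((1-\delta)t\bigr),\qquad 0<\delta<1,\ t>0 .
\]
As the constant is independent of $n$ and $\delta$ is free, $(f_n)$ has a cutoff if and only if $(g_n)$ does, \emph{with the same cutoff time}, and $T_n(\epsilon)$ is comparable, for each fixed $\epsilon$, to $T^g_n(\epsilon):=\min\{t\ge0:g_n(t)\le\epsilon\}$. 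Everything is then read off from the transparent $g_n$.

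For $g_n$ the mixing time is explicit: since $\{i:S_{n,i}>\epsilon\}=\{i\ge j_n(\epsilon)\}$,
\[
  T^g_n(\epsilon)=\max_{i\ge j_n(\epsilon)}\frac{\log(S_{n,i}/\epsilon)}{\lambda_{n,i}},
\]
which differs from $\tau_n(\epsilon)=\max_{i\ge j_n(\epsilon)}\lambda_{n,i}^{-1}\log(1+S_{n,i})$ only in the numerator $\log(1+S_{n,i})$ versus $\log(S_{n,i}/\epsilon)$; since $S_{n,i}>\epsilon$ on the relevant range the two agree up to an additive $O_\epsilon(1/\lambda_n(\epsilon))$, so with $R_n(c):=\lambda_n(c)\tau_n(c)$ one has $\lambda_n(c)T^g_n(c)=R_n(c)+O_c(1)$ and, taking $i=j_n(c)$, $R_n(c)\ge\log(1+c)>0$ always. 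Keeping a single term, $g_n(t)\ge S_{n,i}e^{-\lambda_{n,i}t}$; taking $i$ to be a maximiser in $\tau_n(c)$ and using $\lambda_n(c)\le\lambda_{n,i}$ (so $1+S_{n,i}\ge e^{R_n(c)}$) yields, for $0<\theta<1$,
\[
  f_n\bigl(\theta\tau_n(c)\bigr)\ \ge\ g_n\bigl(\theta\tau_n(c)\bigr)\ \ge\ S_{n,i}(1+S_{n,i})^{-\theta}\ \ge\ \frac{c}{1+c}\,e^{(1-\theta)R_n(c)} .
\]
Conversely, for $i\ge j_n(c)$ one has $\log(1+S_{n,i})\le\lambda_{n,i}\tau_n(c)$, whence $S_{n,i}e^{-\lambda_{n,i}\theta\tau_n(c)}\le e^{-(\theta-1)R_n(c)}$ for $\theta>1$; together with $S_{n,i}e^{-\lambda_{n,i}t}\le S_{n,i}\le c$ for $i<j_n(c)$ — the supremum (resp.\ sum) of this ``head'' over $i<j_n(c)$ being $o(1)$ once $R_n(c')\to\infty$ for one fixed $c'<c$, by splitting it at $j_n(c')$ (below: $\le c'$; above: $\le c\,e^{-\lambda_n(c')\theta\tau_n(c)}\to0$) — one gets $g_n(\theta\tau_n(c))\to0$, hence $f_n(\theta\tau_n(c))\to0$ (via the comparison, with $\delta$ small enough that $(1-\delta)\theta>1$), for every $\theta>1$ whenever $R_n(c)\to\infty$.

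The equivalences then fall out. If $R_n(c)\to\infty$ for every $c>0$, then for each fixed $c$ the head is negligible and, in addition, $\tau_n(\cdot)$ is \emph{eventually constant on $(0,c]$}: were $\tau_n(c')>\tau_n(c)$ for some $c'<c$, the extra maximiser would sit at an index $j_n(c')\le i<j_n(c)$, forcing $\lambda_n(c')\tau_n(c')\le\log(1+c)$, against $R_n(c')\to\infty$. Hence the two displayed bounds give $f_n(\theta\tau_n(c))\to\infty$ for $\theta<1$ and $\to0$ for $\theta>1$, so each $\tau_n(c)$ is a cutoff time; in particular $\mathcal F(\mathcal A,\Lambda)$ has a cutoff and all the $\tau_n(c)$ are mutually asymptotic. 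For the converse, the lower bound shows $f_n$ cannot drop below $\tfrac{c}{1+c}e^{(1-\theta)R_n(c)}$ before time $\theta\tau_n(c)$; combining this with the matching upper bound on $T_n(\epsilon)$ in terms of $\tau_n$ from the $g_n$ analysis, the comparison $T^g_n(\epsilon)\le T_n(\epsilon)$, and the definitional equivalence ``cutoff $\Leftrightarrow T_n(\epsilon)\sim T_n(\delta)$ for all $\epsilon,\delta>0$'' recorded after Definition~\ref{d-ltcutoff}, one forces $R_n(c)\to\infty$ for every $c$ whenever $(f_n)$ has a cutoff or merely $T_n(\epsilon_0)\lambda_n(c)\to\infty$ for one fixed $\epsilon_0$ and all $c$. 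This yields $(1)\Leftrightarrow(3)$ and, using $T_n(\epsilon)\ge T^g_n(\epsilon)$, also $(2)\Leftrightarrow(3)$, as well as the closing assertion that $\tau_n(c)$ is a cutoff time for all $c$.

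The crux is the quantitative content of the converse: upgrading the order relations ``$\asymp$'' to the genuine asymptotics ``$\sim$'' needed to locate the cutoff time at $\tau_n(c)$ for \emph{every} $c$, i.e.\ to show that the relative error in $T_n(\epsilon)$ and the gap between $\tau_n(c_1)$ and $\tau_n(c_2)$ vanish \emph{precisely} when $R_n(c)\to\infty$. This requires controlling the indices $i$ for which $S_{n,i}$ is merely comparable to the threshold (whose contribution to $T^g_n(\epsilon)$ is $O(1)$, not $o(1)$) and the head $\sum_{i<j_n(c)}a_{n,i}e^{-\lambda_{n,i}t}$ when $a_{n,1}$ is small — the decisive observation being that a non‑negligible head forces some small frequency to carry non‑vanishing mass, which keeps $R_n(c)$ bounded for small $c$ and so rules out a cutoff anyway. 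The conceptual ingredients — the sandwich $g_n\le f_n\le(e+\tfrac1\delta)g_n((1-\delta)\,\cdot\,)$ and the single‑term lower bound — are easy; it is this bookkeeping, for which the $\log(1+\cdot)$ and the $\max$ in $\tau_n(c)$ are tailored, that does the real work.
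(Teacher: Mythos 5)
First, a point of comparison: the paper does not prove this proposition at all --- it is quoted verbatim from \cite[Theorem 2.4]{CHS16} --- so your argument has to stand on its own, and its route (comparing $f_n$ with the maximal-term function $g_n(t)=\max_i S_{n,i}e^{-\lambda_{n,i}t}$) is a legitimate self-contained alternative to citing \cite{CHS16}. The pieces you actually establish are correct: the sandwich $g_n(t)\le f_n(t)\le (e+1/\delta)\,g_n((1-\delta)t)$, the exact formula for $T_n^g(\epsilon)$ and the bound $\lambda_n(c)T_n^g(c)=R_n(c)+O_c(1)$, the single-term lower bound $f_n(\theta\tau_n(c))\ge\frac{c}{1+c}e^{(1-\theta)R_n(c)}$ for $\theta<1$, the tail bound $S_{n,i}e^{-\lambda_{n,i}\theta\tau_n(c)}\le e^{-(\theta-1)R_n(c)}$ for $i\ge j_n(c)$, and the eventual-constancy claim (which is exactly the content of Remark \ref{r-ltcutoff}). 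Two local corrections: your head estimate only yields $\limsup_n(\mathrm{head})\le c'$, so it is \emph{not} $o(1)$ ``once $R_n(c')\to\infty$ for one fixed $c'<c$''; you need $R_n(c')\to\infty$ for arbitrarily small $c'$ (harmless under (3), but as phrased the claim is false). Moreover, in the ``above'' piece the convergence $e^{-\lambda_n(c')\theta\tau_n(c)}\to 0$ is not automatic, since $\tau_n(c)\le\tau_n(c')$; it relies on the eventual-constancy step, which is therefore load-bearing there rather than an ``in addition''. Granting these, your argument does give (3)$\Rightarrow$(1),(2) and that every $\tau_n(c)$ is a cutoff time.

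The genuine gap is the converse, i.e. (1)$\Rightarrow$(3) and (2)$\Rightarrow$(3). You assert that combining the lower bound, ``the matching upper bound on $T_n(\epsilon)$ in terms of $\tau_n$'', and ``cutoff $\Leftrightarrow T_n(\epsilon)\sim T_n(\delta)$'' \emph{forces} $R_n(c)\to\infty$ for every $c$, and your closing paragraph concedes that the quantitative bookkeeping needed there ``does the real work'' --- but that work is nowhere carried out, so as written these implications are claims, not proofs. They can in fact be closed with the tools you already have, e.g.: for (1)$\Rightarrow$(3), from $f_n(t)\ge S_{n,j_n(c)}e^{-\lambda_n(c)t}>c\,e^{-\lambda_n(c)t}$ and $f_n(at_n)\to 0$ for $a>1$ deduce $\lambda_n(c)t_n\to\infty$; from $g_n(s)\le\max\{c,\,e^{-\lambda_n(c)(s-\tau_n(c))}\}$ for $s\ge\tau_n(c)$ and the sandwich, $f_n(at_n)\le(e+1/\delta)\max\{c,1\}$ whenever $(1-\delta)at_n\ge\tau_n(c)$, which contradicts $f_n(at_n)\to\infty$ for $a<1$ unless $\liminf_n\tau_n(c)/t_n\ge 1$; multiplying the two conclusions gives $R_n(c)\to\infty$. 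For (2)$\Rightarrow$(3), the sandwich gives $T_n(\epsilon_0)\le\frac{1}{1-\delta}\bigl(\tau_n(c')+C_{\epsilon_0,c'}/\lambda_n(c')\bigr)$ for every $c'\le\epsilon_0/(e+1/\delta)$, so the hypothesis applied at such $c'$ yields $R_n(c')\to\infty$ for all small $c'$, and your eventual-constancy lemma propagates this upward to all $c$. Until an argument of this kind is written out, the proposal establishes only the direction (3)$\Rightarrow$(1),(2).
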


\begin{rem}\label{r-ltcutoff}
It was shown in \cite[Lemma 2.5]{CHS16} that, if $\tau_n(c)\lambda_n(c)\ra\infty$, then $\tau_n(c')\lambda_n(c')\ra\infty$ for all $c'>c$.
\end{rem}

\subsection{Proof of Theorem \ref{p-prodmoderate}}\label{ss-dof}

The proof of Theorem \ref{p-prodmoderate} is based on some crucial techniques, of which proofs are either developed or involved and are addressed in the appendix for reference. See Lemmas \ref{l-prodhd}, \ref{l-dtctcomp} and \ref{l-hdsub} for details.

\begin{proof}[Proof of Theorem \ref{p-prodmoderate} (The continuous time case)]
To prove this proposition, it suffices to consider, by Proposition \ref{p-tvhdcomp} and (\ref{eq-tvhdcomp}), the Hellinger distance. Let $d_{n,H}^{(c)}$ and $d_{n,i,H}^{(c)}$ be the Hellinger distances of the $n$th and $(n,i)$th random walks in $\mathcal{F}^\mathcal{P}_c$ and $\mathcal{F}_c$, and set $\eta=\inf\{Q_{n,i}(x)|x\in E_{n,i},\,1\le i\le k_n\,n\ge 1\}$. By Proposition \ref{p-moderatehd}, there is $C_1>1$ such that, for all $1\le i\le k_n$ and $n\ge 1$,
\begin{equation}\label{eq-hddiam}
 \frac{1}{4}e^{-C_1t/\rho_{n,i}^2}\le d_{n,i,H}^{(c)}(t)\le C_1e^{-\eta t/(4\rho_{n,i}^2)}, \quad\forall t\ge 0,
\end{equation}
where $\rho_{n,i}\ge 4$ is required for the first inequality.

For (1), set $M=\sup_nk_n$. By (\ref{eq-hddiam}), one has
\[
 d_{n,i,H}^{(c)}(ap_{n,i}t_n/q_n)\le C_1e^{-a\eta\ell_{n,i}t_n/(4C)}\le
 C_1(i+1)^{-a\eta/(4C)}\le C_12^{-a\eta/(4C)},
\]
and, in addition with the fact $t_n\le \log(M+1)/\ell_{n,1}$,
\[
 d_{n,1,H}^{(c)}(ap_{n,1}t_n/q_n)\ge\frac{1}{4}e^{-aC_1C\ell_{n,1}t_n}\ge\frac{1}{4}(M+1)^{-aC_1C}.
\]
Consequently, the replacement of $A,p_i$ with $1/\sqrt{2},p_{n,i}/q_n$ in Lemma \ref{l-prodhd} yields
\[
 1-\exp\left\{-\frac{1}{16}(M+1)^{-2aC_1C}\right\}\le d_{n,H}^{(c)}(at_n)^2\le 1-\exp\left\{-MC_1^22^{1-a\eta/(2C)}\right\},
\]
for all $a>A:=(4C/\eta)(\log_2C_1+1/2)$ and $n\ge 1$. This proves (1).

For (2), note that (i)$\Ra$(ii) is clear from the definition of cutoffs and pre-cutoffs, and (iv)$\Ra$(v) is trivial. To prove the other equivalences, we first make some analysis on $d_{n,H}^{(c)}$. Let $C_1$ be the constant in (\ref{eq-hddiam}), $A$ be the constant defined as above and $N$ be a positive integer such that $\rho_{n,i}\ge 4$ for $i\ge N$ and $n\ge 1$. In a similar reasoning as before, one can show that
\begin{equation}\label{eq-prodhdlb}
 d_{n,H}^{(c)}(t)^2\ge 1-\exp\left\{-\sum_{i=1}^{k_n}d_{n,i,H}^{(c)}(p_{n,i}t/q_n)^2\right\},\quad\forall t>0,
\end{equation}
and
\begin{equation}\label{eq-prodhdub}
 d_{n,H}^{(c)}(t)^2\le 1-\exp\left\{-2\sum_{i=1}^{k_n}d_{n,i,H}^{(c)}(p_{n,i}t/q_n)^2\right\},\quad \forall t>At_n.
\end{equation}
By (\ref{eq-hddiam}), we have
\[
 \sum_{i=1}^{k_n}d_{n,i,H}^{(c)}(p_{n,i}t/q_n)^2\le C_1^2\sum_{i=1}^{k_n}e^{-\eta p_{n,i}t/(2q_n\rho_{n,i}^2)}\le C_1^2\sum_{i=1}^{k_n}e^{-\eta t\ell_{n,i}/(2C)}
\]
and
\[
 \sum_{i\in I_n}d_{n,i,H}^{(c)}(p_{n,i}t/q_n)^2\ge \frac{1}{16}\sum_{i\in I_n}e^{-2C_1p_{n,i}t/\rho_{n,i}^2}\ge \frac{1}{16}\sum_{i\in I_n}e^{-2CC_1t\ell_{n,i}},
\]
where $I_n=\{1\le i\le k_n|\rho_{n,i}\ge 4\}$. Putting the last terms in the above computations back to (\ref{eq-prodhdlb}) and (\ref{eq-prodhdub}) yields
\begin{equation}\label{eq-prodhdb}
 1-\exp\left\{-\frac{1}{16}g_n(2CC_1t)\right\}\le d_{n,H}^{(c)}(t)^2\le 1-\exp\left\{-2C_1^2f_n(\eta t/(2C))\right\},
\end{equation}
where $f_n(t)=\sum_{i=1}^{k_n}e^{-\ell_{n,i}t}$, $g_n(t)=\sum_{i\in I_n}e^{-\ell_{n,i}t}$ and the second inequality holds for $t>At_n$. We are now ready to proceed the proof of (ii)$\Ra$(iii)$\Ra$(iv) and (v)$\Ra$(i).

To see (ii)$\Ra$(iii), assume that $\mathcal{F}^\mathcal{P}_c$ presents a pre-cutoff in the Hellinger distance and let $s_n>0$ and $B_2>B_1>0$ be such that
\begin{equation}\label{eq-precutoff2}
 \lim_{n\ra\infty}d_{n,H}^{(c)}(B_2s_n)=0,\quad \liminf_{n\ra\infty}d_{n,H}^{(c)}(B_1s_n)=\alpha>0.
\end{equation}
Note that
\[
  f_n(at_n)\le\sum_{i=1}^{k_n}(i+1)^{-a}\le \int_1^\infty u^{-a}du=\frac{1}{a-1},\quad\forall a>1.
\]
By the second inequality of (\ref{eq-prodhdb}) and the fact $A>2C/\eta$, we have
\begin{equation}\label{eq-mixlb}
 d_{n,H}^{(c)}(at_n)^2\le 1-\exp\left\{-\frac{2C_1^2}{a\eta/(2C)-1}\right\},\quad\forall a>A,\,n\ge 1.
\end{equation}
Next, let's fix $a>(2C_1^2/\alpha^2+1)A$. By the fact of $A>2C/\eta$, one may derive $a\eta/(2C)-1>2C_1^2/\alpha^2$ and, by (\ref{eq-mixlb}), this leads to $d_{n,H}^{(c)}(at_n)^2\le 1-e^{-\alpha^2}<\alpha^2$.
As a result of the second limit in (\ref{eq-precutoff2}), we obtain that $at_n\ge B_1s_n$ for $n$ large enough.
In addition with the fact $1\in I_n$ for all $n\ge 1$, we may conclude from the first inequality of (\ref{eq-prodhdb}) that
\begin{align}
 0&=\lim_{n\ra\infty}d_{n,H}^{(c)}(B_2s_n)^2\ge
 \lim_{n\ra\infty}d_{n,H}^{(c)}(aB_2t_n/B_1)^2\notag\\
 &\ge 1-\exp\left\{-\frac{1}{16}\limsup_{n\ra\infty}
 e^{-2aB_2CC_1t_n\ell_{n,1}/B_1}\right\},\notag
\end{align}
which leads to (iii).

For (iii)$\Ra$(iv), assume that $t_n\ell_{n,1}\ra\infty$. By Proposition \ref{p-ltcutoff} and Remark \ref{r-ltcutoff}, the family $(f_n)_{n=1}^\infty$ has a cutoff with cutoff time $t_n$. By (\ref{eq-prodhdb}), this implies
\[
 \lim_{n\ra\infty}d_{n,H}^{(c)}(at_n)=0,\quad\forall a>A,\quad
 \lim_{n\ra\infty}d_{n,H}^{(c)}(at_n)=1,\quad\forall 0<a<1/(2CC_1),
\]
where the second limit also uses the fact
\[
 g_n(t)=f_n(t)-(f_n(t)-g_n(t))\ge f_n(t)-Ne^{-\ell_{n,1}t}.
\]
As a consequence, when $\epsilon\in(0,1)$, one has $t_n/(4CC_1)\le T_{n,H}^{(c)}(\epsilon)\le 2t_n$ for $n$ large enough, which gives (iv) and the order of the mixing time.

To show (v)$\Ra$(i), it suffices to prove $T_{n,H}^{(c)}(\epsilon)-T_{n,H}^{(c)}(\delta)=O(1/\ell_{n,1})$ for all $0<\epsilon<\delta<1$, which is exactly the specific conclusion in (2). First, we need a refinement of (\ref{eq-prodhdub}). Let $\delta\in(0,1)$. In Lemma \ref{l-prodhd}, the first inequality implies $T_{n,H}^{(c)}(\delta)\ge\max\{T_{n,i,H}^{(c)}(\delta)q_n/p_{n,i}|1\le i\le k_n\}$, while the third inequality yields
\[
 d_{n,H}^{(c)}(t)^2\le1-\exp\left\{-\frac{1}{1-\delta^2}\sum_{i=1}^{k_n}
 d_{n,i,H}^{(c)}\left(\frac{p_{n,i}t}{q_n}\right)^2\right\},\quad\forall t\ge T_{n,H}^{(c)}(\delta).
\]
Let $t=T_{n,H}^{(c)}(\delta)+a/\ell_{n,1}$ with $a>0$. By the quasi-submultiplicativity in Lemma \ref{l-hdsub}, one has
\[
 d_{n,i,H}^{(c)}\left(\frac{p_{n,i}t}{q_n}\right)\le 4d_{n,i,H}^{(c)}\left(\frac{p_{n,i}T_{n,H}^{(c)}(\delta)}{q_n}\right)d_{n,i,H}^{(c)}
 \left(\frac{ap_{n,i}}{\ell_{n,1}q_n}\right).
\]
Putting this back to the upper bound for $d_{n,H}^{(c)}(t)$ yields
\[
 d_{n,H}^{(c)}(t)^2\le 1-\exp\left\{-\frac{16}{1-\delta^2}\sum_{i=1}^{k_n}
 d_{n,i,H}^{(c)}\left(\frac{p_{n,i}T_{n,H}^{(c)}(\delta)}{q_n}\right)^2
 d_{n,i,H}^{(c)}\left(\frac{ap_{n,i}}{\ell_{n,1}q_n}\right)^2\right\}.
\]
Observe that the second inequality in (\ref{eq-hddiam}) and (\ref{eq-prodhdlb}) give
\[
 d_{n,i,H}^{(c)}\left(\frac{ap_{n,i}}{\ell_{n,1}q_n}\right)^2\le C_1^2e^{-a\eta/(2C)},\quad
 \sum_{i=1}^{k_n}d_{n,i,H}^{(c)}\left(\frac{p_{n,i}T_{n,H}^{(c)}(\delta)}{q_n}\right)^2
 \le\log\frac{1}{1-\delta^2}.
\]
Combining the last three inequalities leads to
\[
 d_{n,H}^{(c)}\left(T_{n,H}^{(c)}(\delta)+a/\ell_{n,1}\right)^2\le 1-\exp\left\{-\frac{C_1^2e^{-a\eta/(2C)}}{1-\delta^2}
 \log\frac{1}{1-\delta^2}\right\},\quad\forall a>0.
\]
As the right hand side tends to $0$ as $a$ tends to infinity, we obtain $T_{n,H}^{(c)}(\epsilon)-T_{n,H}^{(c)}(\delta)=O(1/\ell_{n,1})$ for $\epsilon\in(0,\delta)$, as desired. This finishes the proof of (2).
\end{proof}

\begin{proof}[Proof of Theorem \ref{p-prodmoderate} (The discrete time case)]
We shall prove the discrete time case by identifying the items in (2) with the continuous time case. First, we show that $T_{n,\textnormal{\tiny TV}}(\epsilon)\ra\infty$ and $T_{n,\textnormal{\tiny TV}}^{(c)}(\epsilon)\ra\infty$ for some $\epsilon\in(0,1)$. Let $d_{n,i,\textnormal{\tiny TV}}^{(c)}$ be the total variation of the $(n,i)$th random walk in $\mathcal{F}_c$ and let $N>0$ be a positive integer such that $\rho_{n,i}\ge 4$ for $i\ge N$ and $n\ge 1$. By Proposition \ref{p-moderatehd}, there is $C_2>0$ such that
\[
 d_{n,i,\textnormal{\tiny TV}}^{(c)}(t)\ge \frac{1}{2}e^{-C_2t/\rho_{n,i}^2}\ge\frac{1}{2}e^{-C_2t/16},\quad\forall t\ge 0,\,i\ge N,\,n\ge 1.
\]
Note that the first inequality in Lemma \ref{l-prodhd} also holds for the total variation (see \cite[Proposition 3.3]{CK16}) and this implies that, for $k_n>N$,
\begin{align}
 d_{n,\textnormal{\tiny TV}}^{(c)}(t)&\ge \max_{N\le i\le k_n}d_{n,i,\textnormal{\tiny TV}}^{(c)}\left(\frac{p_{n,i}t}{q_n}\right)\ge\frac{1}{2}\left\{-\frac{C_2t}{16}
 \min_{N\le i\le k_n}\frac{p_{n,i}}{q_n}\right\}\notag\\
 &\ge\frac{1}{2}\exp\left\{-\frac{C_2t}{16(k_n-N)}\sum_{i=N}^{k_n}\frac{p_{n,i}}{q_n}\right\}
 \ge\frac{1}{2}\exp\left\{-\frac{C_2t}{16(k_n-N)}\right\},\notag
\end{align}
where the last inequality uses the fact that $(p_{n,i}/q_n)_{i=1}^{k_n}$ is a probability vector.
As $k_n\ra\infty$, one has $T_{n,\textnormal{\tiny TV}}^{(c)}(1/4)\ge 8(k_n-N)/C_2$ for $n$ large enough, which yields $T_{n,\textnormal{\tiny TV}}^{(c)}(1/4)\ra\infty$. In the discrete time case, observe that, by the triangle inequality,
\[
 d_{n,\textnormal{\tiny TV}}^{(c)}(t)\le \sum_{m=0}^\infty e^{-t}\frac{t^m}{m!}d_{n,\textnormal{\tiny TV}}(m)\le \sum_{m=0}^\ell e^{-t}\frac{t^m}{m!}+\left(\sum_{m=\ell+1}^\infty e^{-t}\frac{t^m}{m!}\right)d_{n,\textnormal{\tiny TV}}(\ell).
\]
When $t=T_{n,\textnormal{\tiny TV}}^{(c)}(1/4)$ and $\ell=\lceil t/2\rceil$, it's easy to check (or to see from \cite[Lemma A.1]{CSal08}) that
\[
 \lim_{n\ra\infty}e^{-t}\sum_{m=0}^{\ell}\frac{t^m}{m!}=0,
\]
which leads to
\[
 \lim_{n\ra\infty}d_{n,\textnormal{\tiny TV}}\left(\left\lceil \frac{1}{2}T_{n,\textnormal{\tiny TV}}^{(c)}(1/4)\right\rceil\right)\ge\frac{1}{4}.
\]
Consequently, we obtain $T_{n,\textnormal{\tiny TV}}(1/5)\ge \frac{1}{2}T_{n,\textnormal{\tiny TV}}^{(c)}(1/4)$ for $n$ large enough and, thus, $T_{n,\textnormal{\tiny TV}}(1/5)\ra\infty$. Now, we are ready to prove (2) for the discrete time case.

Let ($*$)' with $*\in\{\text{i},\text{ii},\text{iv},\text{v}\}$ be respectively the corresponding statements for $\mathcal{F}^\mathcal{P}$ in Theorem \ref{p-prodmoderate}(2). Immediately, the equivalence of ($*$) and ($*$)' with $*\in\{\text{i},\text{ii}\}$ is given by Proposition \ref{p-dtctcomp}. Let $\mu_n$ and $\mu_{n,1}$ be the second largest eigenvalues of the transition matrices determined by $Q_n$ and $Q_{n,1}$. It is easy to check that $1-\mu_n\le(p_{n,1}/q_n)(1-\mu_{n,1})$. A similarly reasoning as in the proof of Proposition \ref{p-moderatehd} implies
\[
 d_{n,\textnormal{\tiny TV}}(m)\ge \mu_n^m,\quad 1-\mu_{n,1}\le C_3/\rho_{n,1}^2,\quad\forall m\ge 0,\,n\ge 1,
\]
where $C_3$ is a positive constant depending on $A,d$. As a result, this yields
\[
 d_{n,\textnormal{\tiny TV}}(m)\ge e^{-m(1-\mu_n)}\ge e^{-C_3mp_{n,1}/(q_n\rho_{n,1}^2)}
 \ge e^{-CC_3m\ell_{n,1}}.
\]
When $\mathcal{F}^\mathcal{P}$ has a total variation cutoff, one has
\[
 \exp\left\{-2CC_3\liminf_{n\ra\infty}T_{n,\textnormal{\tiny TV}}(\epsilon)\ell_{n,1}\right\}\le\lim_{n\ra\infty}d_{n,\textnormal{\tiny TV}}\left(2T_{n,\textnormal{\tiny TV}}(\epsilon)\right)=0,\quad\forall \epsilon\in(0,1).
\]
This proves (i)'$\Ra$(iv)'.

Based on the above discussions, it remains to show (v)'$\Ra$(v). Assume that $T_{n,\textnormal{\tiny TV}}(\epsilon_1)\ell_{n,1}\ra\infty$ for some $\epsilon_1\in(0,1)$. We will prove (v) by contradiction and thus assume the inverse that $T_{\xi_n,\textnormal{\tiny TV}}^{(c)}(\epsilon_2)\ell_{\xi_n,1}=O(1)$, where $\epsilon_2\in(0,1)$ and $(\xi_n)_{n=1}^\infty$ is an increasing sequence of positive integers. Note that we may restrict ourselves to the case of $\epsilon_2\le 1/4$. Set
$r_n=\sqrt{T_{\xi_n,\textnormal{\tiny TV}}(\epsilon_1)/\ell_{\xi_n,1}}$.
Obviously, $T_{\xi_n,\textnormal{\tiny TV}}^{(c)}(\epsilon_2)=o(r_n)$ and $r_n=o(T_{\xi_n,\textnormal{\tiny TV}}(\epsilon_1))$. By the quasi-submultiplicativity of the total variation, one has
\begin{align}
 \limsup_{n\ra\infty}2d_{\xi_n,\textnormal{\tiny TV}}^{(c)}(r_n)&\le \limsup_{n\ra\infty}\left(2d_{\xi_n,\textnormal{\tiny TV}}\left(T_{\xi_n,\textnormal{\tiny TV}}^{(c)}(\epsilon_2)\right)\right)^{\left\lfloor r_n\big/T_{\xi_n,\textnormal{\tiny TV}}^{(c)}(\epsilon_2)\right\rfloor}\notag\\
 &\le\limsup_{n\ra\infty}2^{-\left\lfloor r_n\big/T_{\xi_n,\textnormal{\tiny TV}}^{(c)}(\epsilon_2)\right\rfloor}=0.\notag
\end{align}
As a consequence of Lemma \ref{l-dtctcomp}(3), this implies
\[
 \lim_{n\ra\infty}d_{\xi_n,\textnormal{\tiny TV}}(\lceil ar_n\rceil)=0,\quad\forall a>1.
\]
But, however, as $ar_n=o(T_{\xi_n,\textnormal{\tiny TV}}(\epsilon_1))$, we have
\[
 \liminf_{n\ra\infty}d_{\xi_n,\textnormal{\tiny TV}}(\lceil ar_n\rceil)\ge\liminf_{n\ra\infty}d_{\xi_n,\textnormal{\tiny TV}}\left(T_{\xi_n,\textnormal{\tiny TV}}(\epsilon_1)-1\right)\ge\epsilon_1>0.
\]
This makes a contradiction and, hence, $T_{n,\textnormal{\tiny TV}}(\epsilon_2)\ell_{n,1}\ra\infty$, as desired.
\end{proof}

\section*{Acknowledgements}
The authors thank an anonymous referee for a careful reading of the manuscript and helpful comments.
The first author is partially supported by grant MOST 104-2115-M-009-013-MY3 and NCTS. The second author is partially supported by the Grant-in-Aid for Scientific Research (A)
25247007 and 17H01093.

\appendix

\section{Techniques and auxiliary results}

\begin{lem}\cite[Proposition 3.2]{CK16}\label{l-prodhd}
Let $(G_i,Q_i,U_i)_{i=1}^n$ be random walks on finite groups and $(G,Q,U)$ be their product according to the probability vector $(p_1,...,p_n)$. Let $d_{i,H}^{(c)}$ and $d_H^{(c)}$ be the Hellinger distances of the continuous time random walks associated with $(G_i,Q_i,U_i)$ and $(G,Q,U)$. Then, one has
\[
 d_H^{(c)}(t)\ge\max\left\{d_{i,H}^{(c)}(p_it)\Big|1\le i\le n\right\},
\]
and
\[
 1-\exp\left\{-\sum_{i=1}^nd_{i,H}^{(c)}(p_it)^2\right\}\le d_H^{(c)}(t)^2\le
 1-\exp\left\{-\frac{1}{1-A^2}\sum_{i=1}^nd_{i,H}^{(c)}(p_it)^2\right\},
\]
where the second inequality requires $t\ge \max\{T_{i,H}^{(c)}(A)/p_i|1\le i\le n\}$ with $A\in(0,1)$.
\end{lem}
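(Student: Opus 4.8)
The plan is to derive all four inequalities from the Hellinger affinity identity (\ref{eq-prodhd}), which in the notation of the lemma reads
\[
 1-d_H^{(c)}(t)^2=\prod_{i=1}^n\left(1-d_{i,H}^{(c)}(p_it)^2\right),
\]
together with two one-line scalar estimates. (For a self-contained derivation of the identity: the tensor factorization (\ref{eq-prodcts}) gives $H_t(x,y)=\prod_iH_{i,p_it}(x_i,y_i)$, and since $U(y)=\prod_iU_i(y_i)$ the affinity $\sum_y\sqrt{H_t(x,y)U(y)}=1-d_H^{(c)}(t)^2$ factors over the independent coordinates, the $i$th factor being $\sum_{y_i}\sqrt{H_{i,p_it}(x_i,y_i)U_i(y_i)}=1-d_{i,H}^{(c)}(p_it)^2$.) Note that each factor $1-d_{i,H}^{(c)}(p_it)^2$ lies in $[0,1]$, hence so does the product.

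For the first displayed inequality, fix $j$ and bound the product above by its $j$th factor (all factors being in $[0,1]$): $1-d_H^{(c)}(t)^2\le 1-d_{j,H}^{(c)}(p_jt)^2$; rearranging and maximizing over $j$ gives $d_H^{(c)}(t)\ge\max_j d_{j,H}^{(c)}(p_jt)$. For the lower bound in the second display, apply $1-x\le e^{-x}$ to each factor to get $\prod_i(1-d_{i,H}^{(c)}(p_it)^2)\le\exp\{-\sum_id_{i,H}^{(c)}(p_it)^2\}$, then subtract from $1$. For the upper bound, I would use $\log(1-x)\ge -x/(1-c)$ for $0\le x\le c<1$ (immediate from $\log(1-x)=-\int_0^x(1-s)^{-1}\,ds$ and $(1-s)^{-1}\le(1-c)^{-1}$ on $[0,c]$), applied with $c=A^2$: the hypothesis $t\ge T_{i,H}^{(c)}(A)/p_i$ rewrites as $p_it\ge T_{i,H}^{(c)}(A)$, which by monotonicity of the continuous-time Hellinger distance forces $d_{i,H}^{(c)}(p_it)\le A$, so $x:=d_{i,H}^{(c)}(p_it)^2\le A^2$ and the estimate applies to each factor; summing over $i$, exponentiating, and subtracting from $1$ gives $d_H^{(c)}(t)^2\le 1-\exp\{-\frac{1}{1-A^2}\sum_id_{i,H}^{(c)}(p_it)^2\}$.

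There is essentially no obstacle here: everything is carried by the affinity factorization (\ref{eq-prodhd}) plus the inequalities $1-x\le e^{-x}$ and $\log(1-x)\ge -x/(1-c)$. The only step that takes a moment's care is matching the threshold hypothesis to the domain of the logarithmic inequality, i.e.\ checking $d_{i,H}^{(c)}(p_it)\le A$ on the stated range of $t$, which rests on the standard monotonicity of the Hellinger distance along a continuous-time (reversible) semigroup.
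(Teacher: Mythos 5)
Your proposal is correct. The paper itself gives no proof of this lemma (it is quoted from \cite[Proposition 3.2]{CK16}), but your argument is the natural one and is self-contained: all four bounds follow from the affinity factorization (\ref{eq-prodhd}), which the paper already records as a consequence of (\ref{eq-prodcts}), combined with the elementary estimates $1-x\le e^{-x}$ and $\log(1-x)\ge -x/(1-A^2)$ on $[0,A^2]$; your handling of the threshold hypothesis via $p_it\ge T_{i,H}^{(c)}(A)$ and the monotonicity of $t\mapsto d_{i,H}^{(c)}(t)$ (which holds for any Markov semigroup, reversibility not needed, cf.\ Lemma \ref{l-hdsub}) is also sound.
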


\begin{lem}\cite[Proposition 3.1]{CSal13-1}\label{l-dtctcomp}
Let $\mathcal{F}=(G_n,Q_n,U_n)_{n=1}^\infty$ be a family of random walks on finite groups and $\mathcal{F}_c$ be the family of continuous time random walks associated with $\mathcal{F}$. For $n\ge 1$, let $d_{n,\textnormal{\tiny TV}},d_{n,\textnormal{\tiny TV}}^{(c)}$ be the total variations of the $n$th random walks in $\mathcal{F},\mathcal{F}_c$. Suppose $\inf_{n\ge 1}Q_n(id)>0$. Then, for any sequence $(t_n)_{n=1}^\infty$ tending to infinity,
\begin{itemize}
\item[(1)] $d_{n,\textnormal{\tiny TV}}(\lfloor t_n\rfloor)=1$ if and only if $d_{n,\textnormal{\tiny TV}}^{(c)}(t_n)=1$.

\item[(2)] If $d_{n,\textnormal{\tiny TV}}(\lceil t_n\rceil)=0$, then $d_{n,\textnormal{\tiny TV}}^{(c)}(at_n)\ra 0$ for all $a>1$.

\item[(3)] If $d_{n,\textnormal{\tiny TV}}^{(c)}(t_n)\ra 0$, then $d_{n,\textnormal{\tiny TV}}(\lceil at_n\rceil)\ra 0$ for all $a>1$.
\end{itemize}
\end{lem}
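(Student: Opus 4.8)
The plan is to deduce everything in (1)--(3) from the Poissonization identity $H_{n,t}=\sum_{m\ge 0}e^{-t}\frac{t^m}{m!}K_n^m$, together with two soft facts valid for every finite chain --- the functions $m\mapsto d_{n,\textnormal{\tiny TV}}(m)$ and $t\mapsto d_{n,\textnormal{\tiny TV}}^{(c)}(t)$ are non-increasing, and $\mathrm{Pois}(t)$ concentrates around $t$ with fluctuations of order $\sqrt t=o(t)$ --- and, for the two delicate implications, the laziness hypothesis $\delta:=\inf_nQ_n(id)>0$. I would treat the four implications contained in (1)--(3) separately, splitting them into two ``soft'' ones that bound the continuous-time distance above by the discrete-time one (needing no laziness) and two ``hard'' ones for the reverse bound (where laziness is essential). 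Throughout, statements like ``$d=1$'' or ``$d=0$'' are read as the corresponding limits along $n$, which is the only way (1) can be meaningful for finite groups.

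For the soft directions, Poissonization gives $d_{n,\textnormal{\tiny TV}}^{(c)}(t)\le\sum_m e^{-t}\frac{t^m}{m!}\,d_{n,\textnormal{\tiny TV}}(m)$, so monotonicity yields, for every integer $M$,
\[
 d_{n,\textnormal{\tiny TV}}^{(c)}(t)\le \mathbb P(\mathrm{Pois}(t)<M)+d_{n,\textnormal{\tiny TV}}(M).
\]
Taking $M=\lceil t_n\rceil$ and $t=at_n$, $a>1$, proves (2): a Poisson large-deviation bound makes $\mathbb P(\mathrm{Pois}(at_n)\le t_n)\to0$. Taking $M=\lfloor t_n\rfloor$ and $t=t_n$ proves ``$\Leftarrow$'' in (1): if $d_{n,\textnormal{\tiny TV}}(\lfloor t_n\rfloor)\le1-\epsilon$ along a subsequence, then along it $d_{n,\textnormal{\tiny TV}}^{(c)}(t_n)\le1-\epsilon\,\mathbb P(\mathrm{Pois}(t_n)\ge\lfloor t_n\rfloor)\le1-\epsilon/4$ for $n$ large, since $\mathbb P(\mathrm{Pois}(t_n)\ge t_n)\to\frac12$ by the central limit theorem; contraposing gives $d_{n,\textnormal{\tiny TV}}^{(c)}(t_n)\to1\Rightarrow d_{n,\textnormal{\tiny TV}}(\lfloor t_n\rfloor)\to1$.

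The hard directions are ``$\Rightarrow$'' in (1) and all of (3); both reduce to bounding $d_{n,\textnormal{\tiny TV}}(m)$ above by $d_{n,\textnormal{\tiny TV}}^{(c)}(t)$ for $m,t$ of the same order, and both genuinely need $\delta>0$: for the (non-lazy) deterministic rotation on $\mathbb Z_{N_n}$ with $N_n\to\infty$ and $t_n=N_n^3$ one has $d_{n,\textnormal{\tiny TV}}(\lfloor t_n\rfloor)=d_{n,\textnormal{\tiny TV}}(\lceil at_n\rceil)=1-1/N_n\to1$ for every $a$, while $d_{n,\textnormal{\tiny TV}}^{(c)}(t_n)\to0$ because the continuous step count $\mathrm{Pois}(t_n)$ is spread over a window of width $\sqrt{t_n}\gg N_n$; so both implications collapse without laziness. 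The plan here is a de-Poissonization: writing $K_n=\delta_nI+(1-\delta_n)\widetilde K_n$ with $\delta_n=Q_n(id)\ge\delta$, the $m$-step discrete chain and the time-$t$ continuous chain are mixtures of the same powers $\widetilde K_n^{\,j}$, with mixing weights $\mathrm{Bin}(m,1-\delta_n)$ and $\mathrm{Pois}((1-\delta_n)t)$; when $m\le t$ the Poisson weighting has at least as large a mean and is more spread out, so comparing the two weightings against the monotone sequence $k\mapsto d_{n,\textnormal{\tiny TV}}(k)$ via Chernoff-type tail bounds should produce an estimate of the shape
\[
 d_{n,\textnormal{\tiny TV}}(m)\le\Phi_\delta(d_{n,\textnormal{\tiny TV}}^{(c)}(t)),\qquad m\le t\le m(1+o(1)),
\]
with $\Phi_\delta\colon[0,1]\to[0,1]$ depending only on $\delta$, continuous and increasing, $\Phi_\delta(0)=0$ and $\Phi_\delta(s)<1$ for $s<1$, uniformly in $n$ once $m$ is large. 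Feeding in $m=\lceil at_n\rceil$ (with $a>1$, $t\approx at_n\ge t_n$, and monotonicity of $d_{n,\textnormal{\tiny TV}}^{(c)}$) gives (3); feeding in $m=\lfloor t_n\rfloor$, $t\approx t_n$, and contraposing gives ``$\Rightarrow$'' in (1), because $\Phi_\delta(1-\eta)<1$.

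I expect the main obstacle to be exactly this de-Poissonization estimate. The binomial and Poisson jump counts have mismatched variances (the binomial is underdispersed), so one cannot couple them to agree with high probability and compare the two mixtures of $\widetilde K_n^{\,j}$ term by term; the argument must instead be routed through the monotonicity of $d_{n,\textnormal{\tiny TV}}$ and $d_{n,\textnormal{\tiny TV}}^{(c)}$, keeping enough slack --- absorbed by the factor $a>1$ --- to cover the time distortion, and all constants must be uniform in $n$ and robust to $\delta_n$ approaching $1$. This is the sort of technical estimate one would isolate as a separate lemma, in the spirit of the appendix of the present paper.
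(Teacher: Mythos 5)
The paper gives no argument for this lemma at all: it is imported verbatim from Chen--Saloff-Coste \cite[Proposition 3.1]{CSal13-1}, so your proposal has to be judged as a self-contained proof, and as such it is only half complete. The half you do prove is fine: reading (1)--(3) as limits is the intended meaning, the Poissonization bound $d_{n,\textnormal{\tiny TV}}^{(c)}(t)\le \mathbb P(\mathrm{Pois}(t)<M)+d_{n,\textnormal{\tiny TV}}(M)$ together with monotonicity and Poisson concentration does give (2) and the implication $d_{n,\textnormal{\tiny TV}}^{(c)}(t_n)\to1\Rightarrow d_{n,\textnormal{\tiny TV}}(\lfloor t_n\rfloor)\to1$, and your lazy-versus-rotation example correctly shows the remaining implications are exactly where $\inf_nQ_n(id)>0$ must enter.

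The gap is that those remaining implications --- ``$\Rightarrow$'' in (1) and all of (3) --- are never proved. You reduce them to a de-Poissonization estimate $d_{n,\textnormal{\tiny TV}}(m)\le\Phi_\delta\bigl(d_{n,\textnormal{\tiny TV}}^{(c)}(t)\bigr)$ for $t\approx m$, but this estimate is essentially equivalent to the statements you are trying to prove, and you explicitly leave it as an unproved ``separate lemma,'' so the plan is circular in effect. Moreover, the route you sketch for it does not work as described: writing $K_n^m$ and $H_{n,t}$ as $\mathrm{Bin}(m,1-\delta_n)$- and $\mathrm{Pois}((1-\delta_n)t)$-mixtures of $\widetilde K_n^{\,j}$ is correct, but smallness of the total variation of the Poisson-weighted \emph{mixture} gives no control on the individual powers $\widetilde K_n^{\,j}$ (cancellation can make a mixture of badly unmixed kernels stationary), so ``comparing the two weightings against the monotone sequence'' has nothing to compare against; and since the binomial weights are underdispersed and bounded, they are neither stochastically decomposable over nor dominated by the Poisson weights, so no term-by-term or coupling comparison is available either --- which you yourself observe, without supplying a substitute. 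Some genuinely different input is needed here (for instance the submultiplicativity arguments of \cite{CSal13-1}, which is precisely why the present paper cites that proposition rather than reproving it), and until that ingredient is supplied the lemma is not established by your proposal.
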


\begin{lem}\label{l-hdsub}
Consider an irreducible Markov chain on a finite or countable set $\mathcal{X}$ with transition matrix $K$ and stationary distribution $\pi$. Set $H_t=e^{-t(I-K)}$ and let $d_H,d_H^{(c)}$ be the maximum Hellinger distances defined by
\[
 d_H(m)=\sup_{x\in\mathcal{X}}\left(\frac{1}{2}\sum_{y\in\mathcal{X}}
 \left(\sqrt{K^m(x,y)}-\sqrt{\pi(y)}\right)^2\right)^{1/2},
\]
and
\[
 d_H^{(c)}(t)=\sup_{x\in\mathcal{X}}\left(\frac{1}{2}\sum_{y\in\mathcal{X}}
 \left(\sqrt{H_t(x,y)}-\sqrt{\pi(y)}\right)^2\right)^{1/2}.
\]
Then, the mappings
\[
 m\mapsto 4d_H(m),\quad t\mapsto 4d_H^{(c)}(t),
\]
are non-increasing and submultiplicative.
\end{lem}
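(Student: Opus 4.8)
The non-increasing assertion is the soft part, and I would dispose of it first. For a probability $\nu$ on $\mathcal X$ write $\mathcal A(\nu)=\sum_{y}\sqrt{\nu(y)\pi(y)}$ for its Hellinger affinity with $\pi$, so that $d_H(m)^2=\sup_x\bigl(1-\mathcal A(K^m(x,\cdot))\bigr)$ and $d_H^{(c)}(t)^2=\sup_x\bigl(1-\mathcal A(H_t(x,\cdot))\bigr)$. Writing $(\nu K)(z)=\sum_y\nu(y)K(y,z)$ and $\pi(z)=\sum_y\pi(y)K(y,z)$ and applying Cauchy--Schwarz to these two sums over $y$ yields the data--processing inequality $\mathcal A(\nu K)\ge\mathcal A(\nu)$ for every $\nu$. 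Since $K^{m+1}(x,\cdot)=K^m(x,\cdot)K$, this makes $m\mapsto d_H(m)$ non-increasing, and the identical argument with $H_{s+t}=H_sH_t$ and $\pi H_t=\pi$ handles $t\mapsto d_H^{(c)}(t)$; multiplying by $4$ changes nothing.

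For \textbf{submultiplicativity} it suffices to prove, for every probability $\nu$,
\[
  d_H(\nu K^n,\pi)\ \le\ 4\,d_H(\nu,\pi)\,d_H(n),
\]
since, taking $\nu=K^m(x,\cdot)$, using $K^{m+n}(x,\cdot)=K^m(x,\cdot)K^n$ and then $\sup_x$, one gets $4d_H(m+n)\le\bigl(4d_H(m)\bigr)\bigl(4d_H(n)\bigr)$; the continuous-time statement (and the $H_t$-discrete one) is identical with $H_s,H_t$ in place of the matrix powers. The trivial range is immediate: if $\max\{d_H(m),d_H(n)\}\ge\tfrac14$, say $d_H(m)\ge\tfrac14$, then $4d_H(m)d_H(n)\ge d_H(n)\ge d_H(m+n)$ by monotonicity, so one may assume $d_H(m),d_H(n)<\tfrac14$, i.e.\ the relevant measures are already fairly well mixed.

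For the substantive estimate my plan is to work in $L^2(\pi)$ through the time--reversed kernel $K^{\ast}$ ($\pi(x)K^{\ast}(x,y)=\pi(y)K(y,x)$), using that $d(\nu K^n)/d\pi=(K^{\ast})^n(d\nu/d\pi)$ and $(K^{\ast})^n 1=1$, so that with $f=d\nu/d\pi$ one has $2\,d_H(\nu K^n,\pi)^2=\bigl\|\sqrt{(K^{\ast})^nf}-1\bigr\|_{L^2(\pi)}^2$. A key ingredient here, provable by a Hellinger--weighted Cauchy--Schwarz (write $K^n(x,y)-\pi(y)=(\sqrt{K^n(x,y)}-\sqrt{\pi(y)})(\sqrt{K^n(x,y)}+\sqrt{\pi(y)})$, use $(\sqrt a+\sqrt b)^2\le 2(a+b)$, and average over $x\sim\pi$), is the operator bound $\|(K^{\ast})^n\|_{L^2_0(\pi)\to L^2_0(\pi)}\le 2\sqrt2\,d_H(n)$. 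Writing $f=1+g$ with $g$ mean-zero and linearizing the outer square root via $|\sqrt{1+w}-1|\le|w|$ then gives $d_H(\nu K^n,\pi)^2\le C\,d_H(n)^2\,\chi^2(\nu\|\pi)$ for an absolute constant $C$, which already settles the ``nearly flat'' case, i.e.\ the case where $\chi^2(\nu\|\pi)$ is comparable to $d_H(\nu,\pi)^2$.

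The main obstacle is the complementary case, where $f=d\nu/d\pi$ has a heavy tail so that $\chi^2(\nu\|\pi)\gg d_H(\nu,\pi)^2$ although $d_H(\nu,\pi)<\tfrac14$; there the $L^2$ estimate above is off by the factor $\chi^2(\nu\|\pi)/d_H(\nu,\pi)^2$. My plan for that case is to split $\nu-\pi$ into its Jordan parts $\alpha(\rho_+-\rho_-)$ with $\alpha=d_{\mathrm{TV}}(\nu,\pi)$ (note $\alpha\asymp d_H(\nu,\pi)^2$ precisely in the heavy-tailed regime) and to exploit that $K^n$ \emph{spreads out} the small-mass large-density piece: by the joint convexity of the Hellinger divergence one has $d_H(\rho_\pm K^n,\pi)^2\le\sup_y d_H(\delta_yK^n,\pi)^2\le d_H(n)^2$, and a careful bookkeeping of $(\nu K^n)(z)=\pi(z)\bigl(1+\alpha(r_+(z)-r_-(z))\bigr)$ should turn this into the required gain of a \emph{second} factor of $d_H(n)$; combining (``interpolating'') the two regimes weighted by how heavy-tailed $f$ is is what should produce the universal constant. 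Making this interpolation quantitative is where I expect the real work to lie --- in particular one must avoid $\chi^2$-type quantities that are not comparable to Hellinger ones, and the naive routes (Dobrushin-coefficient contraction, or $d_H^2\le d_{\mathrm{TV}}\le\sqrt2\,d_H$ together with submultiplicativity of $2d_{\mathrm{TV}}$) only give a factor of order $d_H(n)$ and hence the much weaker $d_H(m+n)\lesssim\sqrt{d_H(m)d_H(n)}$. An alternative, cleaner route would be to invoke the theory of contraction coefficients of $f$-divergences towards a stationary distribution, which in the present nearly-mixed regime directly yields $D_{\mathrm{H}}(\nu K^n\|\pi)\le C\,d_H(n)^2\,D_{\mathrm{H}}(\nu\|\pi)$ and hence the claim.
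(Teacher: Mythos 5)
Your monotonicity argument (data processing for the Hellinger affinity via Cauchy--Schwarz) is fine, and the reduction of submultiplicativity to the contraction statement $d_H(\nu K^n,\pi)\le 4\,d_H(\nu,\pi)\,d_H(n)$ is a legitimate strengthening. But the substantive part is not a proof: by your own account the heavy-tailed regime is only a plan (``should turn this into the required gain'', ``where I expect the real work to lie''), and the two ingredients you lean on are themselves shaky. First, the operator bound $\|(K^{\ast})^n\|_{L^2_0(\pi)\to L^2_0(\pi)}\le 2\sqrt2\,d_H(n)$ does not follow from the argument you sketch: averaging $(\sqrt{K^n(x,y)}-\sqrt{\pi(y)})(\sqrt{K^n(x,y)}+\sqrt{\pi(y)})$ over $x\sim\pi$ produces a Hilbert--Schmidt/$\chi^2$-type quantity that is controlled by $d_H(n)$ only if one also controls the density ratios $K^n(x,y)/\pi(y)$, which is exactly what fails on countable spaces; the interpolation route $\|K^n-\Pi\|_{2\to2}\le\sqrt{\|K^n-\Pi\|_{1\to1}\|K^n-\Pi\|_{\infty\to\infty}}$ needs the \emph{reversed} chain's distance, and the lemma is stated for general, not necessarily reversible, chains. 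Second, the ``clean alternative'' is not a citable theorem: the standard contraction theory for $f$-divergences toward $\pi$ gives $\eta_f\le\eta_{\mathrm{TV}}\lesssim d_{\mathrm{TV}}(n)$, and since $d_{\mathrm{TV}}(n)$ can be of order $d_H(n)$ (not $d_H(n)^2$), this is precisely the ``naive route'' you yourself discard; asserting $D_{\mathrm H}(\nu K^n\|\pi)\le C\,d_H(n)^2\,D_{\mathrm H}(\nu\|\pi)$ is essentially restating what has to be proved. So the key quantitative step is missing.

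For comparison, the paper's proof avoids $\chi^2$, operator norms and any regime-splitting altogether. It works pointwise: for each $x,y$ one writes
\[
 \sqrt{K^{n+m}(x,y)}-\sqrt{\pi(y)}=\frac{\sum_z\bigl[K^n(x,z)-\pi(z)\bigr]\bigl[K^m(z,y)-\pi(y)\bigr]}{\sqrt{K^{n+m}(x,y)}+\sqrt{\pi(y)}}=\sum_z A_zB_z,
\]
with $A_z$ the product of the two Hellinger-type differences and $B_z$ the ratio of the corresponding sums, applies Cauchy--Schwarz, and shows the uniform bound $\sum_z B_z^2\le 8$ (the case $K^{n+m}(x,y)\ge\pi(y)$ gives $4$ directly via $(\sqrt a+\sqrt b)^2\le 2(a+b)$; the case $K^{n+m}(x,y)<\pi(y)$ restricts the sum to the two sign-definite index sets and uses Jensen). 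Summing $|\sqrt{K^{n+m}(x,y)}-\sqrt{\pi(y)}|^2\le 8\sum_zA_z^2$ over $y$ and taking suprema yields $4d_H(n+m)\le(4d_H(n))(4d_H(m))$ for arbitrary chains with stationary $\pi$. If you want to salvage your approach, the realistic path is to replicate this pointwise ratio identity rather than to interpolate between a $\chi^2$ estimate and a Jordan-decomposition estimate.
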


\begin{proof}
We deal with the discrete time case, while the continuous time case can be shown in a similar way. Let $n,m$ be positive integers and $x,y\in\mathcal{X}$. Note that
\begin{equation}\label{eq-submulti}
\begin{aligned}
 \sqrt{K^{n+m}(x,y)}-\sqrt{\pi(y)}&=\frac{\sum_{z\in\mathcal{X}}[K^n(x,z)-\pi(z)][K^m(z,y)-\pi(y)]}
 {\sqrt{K^{n+m}(x,y)}+\sqrt{\pi(y)}}\\
 &=\sum_{z\in\mathcal{X}}A_zB_z,
\end{aligned}
\end{equation}
where
\[
 A_z=\left(\sqrt{K^n(x,z)}-\sqrt{\pi(z)}\right)
 \left(\sqrt{K^m(z,y)}-\sqrt{\pi(y)}\right)
\]
and
\[
 B_z=\frac{\left(\sqrt{K^n(x,z)}+\sqrt{\pi(z)}\right)
 \left(\sqrt{K^m(z,y)}+\sqrt{\pi(y)}\right)}
 {\sqrt{K^{n+m}(x,y)}+\sqrt{\pi(y)}}.
\]
Consider the following two cases.

{\it Case 1: $K^{n+m}(x,y)\ge \pi(y)$.} In this case, one may apply the Cauchy-Schwarz inequality to obtain
\[
 \left|\sqrt{K^{n+m}(x,y)}-\sqrt{\pi(y)}\right|^2\le\left(\sum_{z\in\mathcal{X}}A_z^2\right)
 \left(\sum_{z\in\mathcal{X}}B_z^2\right).
\]
Note that $(\sqrt{a}+\sqrt{b})^2\le 2(a+b)$ for all $a,b\ge 0$. As a result, this implies
\begin{align}
 \sum_zB_z^2
 &\le\frac{4\sum_{z\in\mathcal{X}}(K^n(x,z)+\pi(z))(K^m(z,y)+\pi(y))}{\left(\sqrt{K^{n+m}(x,y)}
 +\sqrt{\pi(y)}\right)^2}\notag\\
 &=\frac{4[K^{n+m}(x,y)+3\pi(y)]}{K^{n+m}(x,y)+\pi(y)+2\sqrt{K^{n+m}(x,y)\pi(y)}}\le 4,\notag
\end{align}
and, hence,
\[
 \left|\sqrt{K^{n+m}(x,y)}-\sqrt{\pi(y)}\right|^2\le4\left(\sum_{z\in\mathcal{X}}A_z^2\right).
\]

{\it Case 2: $K^{n+m}(x,y)<\pi(y)$.} For this case, set
\[
 E_1=\{z|\pi(z)>K^n(x,z),\,K^m(z,y)>\pi(y)\}
\]
and
\[
 E_2=\{z|\pi(z)<K^n(x,z),\,K^m(z,y)<\pi(y)\}.
\]
By (\ref{eq-submulti}), one has
\begin{align}
 0<\sqrt{\pi(y)}-\sqrt{K^{n+m}(x,y)}&\le\frac{\sum_{z\in E_1\cup E_2}[\pi(z)-K^n(x,z)][K^m(z,y)-\pi(y)]}{\sqrt{K^{n+m}(x,y)}+\sqrt{\pi(y)}}\notag\\
 &=\sum_{z\in E_1\cup E_2}A_zB_z.\notag
\end{align}
As before, we may apply the Cauchy-Schwarz inequality to get
\[
 \left|\sqrt{K^{n+m}(x,y)}-\sqrt{\pi(y)}\right|^2\notag\\
 \le\left(\sum_{z\in E_1\cup E_2}A_z^2\right)\left(\sum_{z\in E_1\cup E_2}B_z^2\right).
\]
Note that, for $z\in E_1$,
\begin{align}
 &\left(\sqrt{K^n(x,z)}+\sqrt{\pi(z)}\right)^2
 \left(\sqrt{K^m(z,y)}+\sqrt{\pi(y)}\right)^2\notag\\
 \le&4\left(\sqrt{K^n(x,z)}+\sqrt{\pi(z)}\right)^2K^m(z,y)=
 4\left(\sqrt{\frac{K^n(x,z)}{\pi(z)}}+1\right)^2\pi(z)K^m(z,y),\notag
\end{align}
and, for $z\in E_2$,
\begin{align}
 &\left(\sqrt{K^n(x,z)}+\sqrt{\pi(z)}\right)^2
 \left(\sqrt{K^m(z,y)}+\sqrt{\pi(y)}\right)^2\notag\\
 &\hskip1in\le 4K^n(x,z)\left(\sqrt{K^m(z,y)}+\sqrt{\pi(y)}\right)^2.\notag
\end{align}
By the Jensen inequality, this implies
\begin{equation}\label{eq-submulti1}
\begin{aligned}
 &\sum_{z\in E_1}\left(\sqrt{K^n(x,z)}+\sqrt{\pi(z)}\right)^2
 \left(\sqrt{K^m(z,y)}+\sqrt{\pi(y)}\right)^2\\
 &\hskip1in\le 4\left(\sqrt{\sum_{z\in E_1}K^n(x,z)K^m(z,y)}+\sqrt{c_1\pi(y)}\right)^2,
\end{aligned}
\end{equation}
where $c_1=\sum_{z\in E_1}\pi(z)K^m(z,y)/\pi(y)$, and
\begin{equation}\label{eq-submulti2}
\begin{aligned}
 &\sum_{z\in E_2}\left(\sqrt{K^n(x,z)}+\sqrt{\pi(z)}\right)^2
 \left(\sqrt{K^m(z,y)}+\sqrt{\pi(y)}\right)^2\\
 &\hskip1in\le 4\left(\sqrt{\sum_{z\in E_2}K^n(x,z)K^m(z,y)}+\sqrt{c_2\pi(y)}\right)^2,
\end{aligned}
\end{equation}
where $c_2=\sum_{z\in E_2}K^n(x,z)$. Summing up (\ref{eq-submulti1}) and (\ref{eq-submulti2}) yields
\[
 \sum_{z\in E_1\cup E_2}B_z^2\le\frac{4\left(\sqrt{\sum_{z\in E_1\cup E_2}K^n(x,z)K^m(z,y)}
 +\sqrt{(c_1+c_2)\pi(y)}\right)^2}
 {\left(\sqrt{K^{n+m}(x,y)}+\sqrt{\pi(y)}\right)^2}\le 8,
\]
while the last inequality uses the fact of $c_1\le 1$ and $c_2\le 1$. As a consequence, this leads to
\[
 \left|\sqrt{K^{n+m}(x,y)}-\sqrt{\pi(y)}\right|^2
 \le 8\left(\sum_{z\in E_1\cup E_2}A_z^2\right)\le 8\sum_{z\in\mathcal{X}}A_z^2,
\]
which is also applicable for Case 1.

Based on the result in the above discussions, we obtain
\begin{align}
 &4d_H(n+m)=\sup_{x\in\mathcal{X}}\left\{8\sum_{y\in\mathcal{X}}
 \left(\sqrt{K^{n+m}(x,y)}-\sqrt{\pi(y)}\right)^2\right\}^{1/2}\notag\\
 \le&\sup_{x\in\mathcal{X}}\left\{\sum_{y,z\in\mathcal{X}}
 8\left(\sqrt{K^n(x,z)}-\sqrt{\pi(z)}\right)^2\times
 8\left(\sqrt{K^m(z,y)}-\sqrt{\pi(y)}\right)^2\right\}^{1/2}\notag\\
 \le&\sup_{x\in \mathcal{X}}
 \left\{8\left(\sqrt{K^n(x,z)}-\sqrt{\pi(z)}\right)^2\right\}^{1/2}\times 4d_H(m)
 =4d_H(n)\times 4d_H(m).\notag
\end{align}
\end{proof}

\end{document}